\renewcommand{\phi}{\varphi}
\renewcommand{\epsilon}{\varepsilon}
\renewcommand{\theta}{\vartheta}
\def\RR{{\mathbf R}}
\def\QQ{{\mathbf Q}}
\def\BB{{\mathbf B}}
\def\cJ{\mathcal{J}}
\def\cF{\mathcal{F}}
\def\cO{\mathcal{O}}
\def\fra{\mathfrak{a}}
\def\frb{\mathfrak{b}}
\def\frc{\mathfrak{c}}
\def\frm{\mathfrak{m}}
\DeclareMathOperator{\ord}{ord}
\DeclareMathOperator{\NS}{N}
\newtheorem{lemma}{Lemma}[section]
\newtheorem{theorem}[lemma]{Theorem}
\newtheorem{corollary}[lemma]{Corollary}
\newtheorem{proposition}[lemma]{Proposition}
\theoremstyle{definition}
\newtheorem{remark}[lemma]{Remark}
\theoremstyle{remark}
\newtheorem*{remark*}{Remark}
\newtheorem*{note*}{Note}
\begin{document}

\title{The non-nef locus in positive characteristic}

\thanks{2010\,\emph{Mathematics Subject Classification}.
 Primary 14A10; Secondary 13A35.
\newline The author was partially supported by
 NSF grant DMS-1068190 and
  a Packard Fellowship.}
\keywords{Non-nef locus, test ideals, multiplier ideals, big line bundles}

\author[M.~Musta\c{t}\u{a}]{Mircea~Musta\c{t}\u{a}}
\address{Department of Mathematics, University of Michigan,
Ann Arbor, MI 48109, USA}
\email{{mmustata@umich.edu}}

\dedicatory{Dedicated to Joe Harris on the~occasion of
his~sixtieth~birthday}

\baselineskip 16pt \footskip = 32pt

\begin{abstract}
We give an analogue in positive characteristic of 
the description of the non-nef locus from \cite{ELMNP}. In this case, the role of the asymptotic multiplier ideals is played by the asymptotic test ideals. The key ingredient is provided by a uniform
global generation statement involving twists by such ideals.
\end{abstract}

\maketitle

\markboth{M.~MUSTA\c{T}\u{A}}{THE NON-NEF LOCUS IN POSITIVE CHARACTERISTIC}

\section{Introduction}

Let $X$ be a smooth, projective variety over an algebraically closed field $k$. If $D$
is a divisor on $X$, then for every positive integer $m$ we may consider 
the closed subset ${\rm Bs}(mD)$, the base-locus
of the linear system $|mD|$.
The intersection $\bigcap_{m\geq 1}{\rm Bs}(mD)$ is equal to ${\rm Bs}(\ell D)$ for $\ell$ divisible enough;
this is the \emph{stable base locus} $\BB(D)$ of $D$. By definition, we have 
$\BB(D)=\BB(rD)$ for every positive integer $r$, and using this one extends in the obvious 
way the definition of $\BB(D)$ to the case when $D$ is a $\QQ$-divisor.

The stable base locus is a very interesting invariant, but it is quite subtle: for example, two
numerically equivalent divisors can have different stable base loci.
A related subset is the \emph{non-nef locus}, defined as follows. If $D$ is an $\RR$-divisor
on $X$, then
$$\BB_-(D):=\bigcup_A\BB(D+A),$$
where the union is over all ample $\RR$-divisors $A$ such that $D+A$ is a $\QQ$-divisor. 
It follows from the definition that $\BB_-(D)$ only depends on the numerical equivalence class 
of $D$, and $\BB_-(D)$ is empty if and only if $D$ is nef. 

This locus was studied in \cite{ELMNP} over a ground field of characteristic zero. 
The key tool in this study is the asymptotic multiplier ideal and a certain uniform global generation result for twists by such ideals. 
In that context, the global generation statement is a consequence of vanishing theorems
of Kodaira-type and of Castelnuovo-Mumford regularity.
The main point of the present paper is that
a similar uniform global generation result also holds in positive characteristic, if one replaces
the asymptotic multiplier ideal by the asymptotic test ideal (despite the fact that in positive characteristic Kodaira's vanishing theorem and its generalizations may fail).

We recall that test ideals give an analogue in positive characteristic of  multiplier ideals
in characteristic zero. They were introduced by Hara and Yoshida in
\cite{HY} using a generalization of tight closure theory, and it was noticed from the beginning
that they satisfy similar formal properties with those of multiplier ideals in characteristic zero.
Furthermore, there are some very interesting results and open problems concerning the connection between multiplier ideals and test ideals via reduction mod $p$.
We refer to \S 3 for the definition of test ideals and to
\cite{ST} for a more comprehensive overview. 

As it is the case for multiplier ideals in characteristic zero, for a divisor $D$ on a variety $X$
in positive characteristic with $\cO_X(D)$ of non-negative Iitaka dimension, 
one can use an asymptotic construction
to obtain \emph{asymptotic test ideals} $\tau(\lambda\cdot\parallel D\parallel)$ for every
$\lambda\in\RR_{\geq 0}$. The following is our main technical result (see Theorem~\ref{thm_main}
below).

\noindent {\bf Theorem A}. 
Let $X$ be a smooth projective variety over an algebraically closed field of positive characteristic
and let $H$ be an ample divisor on $X$, with $\cO_X(H)$ globally generated.
If $D$ and $E$ are divisors on $X$ such that $\cO_X(D)$ has non-negative Iitaka dimension, and $\lambda\in\QQ_{\geq 0}$ is such that $E-\lambda D$ is nef, then the sheaf
$$\tau(\lambda\cdot \parallel D\parallel)\otimes_{\cO_X}\cO_X(K_X+E+dH)$$
is globally generated for every $d\geq\dim(X)+1$.

Here $K_X$ denotes a canonical divisor on $X$. For the corresponding result in characteristic
zero, in which $\tau(\lambda\cdot \parallel D\parallel)$ is replaced by the asymptotic multiplier ideal
$\cJ(\lambda\cdot\parallel D\parallel)$, see \cite[Corollary~11.2.13]{positivity}.
It was Schwede who first noticed in \cite{Schwede} that one can use an argument due to Keeler 
\cite{Keeler} and Hara (unpublished) to obtain global generation statements involving test ideals. 
The idea is to use Castelnuovo-Mumford regularity and the fact that by pushing-forward
via the Frobenius morphism one can reduce the desired vanishings to Serre's asymptotic vanishing.
Our argument follows the one in \cite{Schwede}, with some modifications coming from the fact that we need to consider test ideals of not necessarily locally principal ideals, and we have the extra
nef divisor $E-\lambda D$ to deal with (in order to do this, we use Fujita's vanishing theorem
instead of Serre's asymptotic vanishing).

Once we have the above uniform global generation statement and its corollaries, the basic results describing $B_-(D)$ for a big divisor $D$ follow as in \cite{ELMNP}. Recall that given
a closed point $x\in X$\footnote{In the main body of the paper we will deal with an arbitrary irreducible proper closed subset $Z\subset X$, not just with points.} and a big divisor 
$D$ on $X$, one defines
the \emph{asymptotic order of vanishing} $\ord_x(\parallel D\parallel)$ by
$$\ord_x(\parallel D\parallel):=\inf_{m\geq 1}\frac{\ord_x|mD|}{m}=\lim_{m\to\infty}
\frac{\ord_x|mD|}{m},$$
where $\ord_x|mD|$ is the order of vanishing at $x$ of a general element in $|mD|$. 
The following are the main properties of this function (see Theorem~\ref{function_big_cone} below).

\noindent{\bf Theorem B}.
Let $X$ be a smooth projective variety over an algebraically closed field of positive characteristic,
and $x$ a closed point on $X$.
\begin{enumerate}
\item[i)] For every big divisor $D$, the asymptotic order of vanishing $\ord_x(\parallel D\parallel)$ only depends
on the numerical class of $D$.
\item[ii)] The function $D\to\ord_x(\parallel D\parallel)$ extends as a continuous function to the cone of big divisor 
classes ${\rm Big}(X)_{\RR}$.
\end{enumerate}

In characteristic zero, this was also proved by Nakayama in \cite{Nak}. As a consequence of
Theorems~A and B, we obtain the following description of the non-nef locus
(see Theorem~\ref{char_nonnef} below).

\noindent{\bf Theorem C}. 
Let $X$ be a smooth projective variety over an algebraically closed field of positive characteristic
and $x$ a closed point on $X$. For a big divisor $D$, the following are equivalent:
\begin{enumerate}
\item[i)] $x$ does not lie in the non-nef locus $\BB_-(D)$.
\item[ii)] There is a divisor $G$ on $X$ such that $x$ does not lie in the base locus of
$|mD+G|$ for every $m\geq 1$.
\item[iii)] There is a real number $M$ such that $\ord_x|mD|\leq M$ for every $m$ with $|mD|$ non-empty.
\item[iv)] $\ord_x(\parallel D\parallel)=0$.
\item[v)] For every $m\geq 1$, the ideal $\tau(\parallel mD\parallel)$ does not vanish at $x$. 
\end{enumerate}

At the boundary of the big cone, the situation is more complicated. If $D$ is a pseudo-effective
$\RR$-divisor, then $D+A$ is big for every ample $\RR$-divisor $A$. As in \cite{Nak}, we 
define $\sigma_x(D):=\sup_A\ord_x(\parallel D+A\parallel)$, where $A$ varies over all
ample $\RR$-divisors. This is tautologically a lower
semi-continuous function on the pseudo-effective cone, but it is not continuous in general. 
Following an idea of Hacon, we show that for every $\lambda\in\RR_{\geq 0}$, there is a unique
minimal element in the set of ideals $\tau(\lambda\cdot\parallel D+A\parallel)$, where $A$ is as above. We denote this ideal by $\tau_+(\lambda\cdot \parallel D\parallel)$.
The following theorem gives the description of the non-nef locus for pseudo-effective divisors. 

\noindent{\bf Theorem D}.
Let $X$ be a smooth projective variety over an algebraically closed field of positive characteristic
and $x$ a closed point on $X$. If $D$ is a pseudo-effective $\RR$-divisor on $X$, then the following are equivalent:
\begin{enumerate}
\item[i)] $x$ does not lie in the non-nef locus $\BB_-(D)$.
\item[ii)] $\sigma_x(D)=0$.
\item[iii)] For every $m\geq 1$, the ideal $\tau_+(m\cdot \parallel D\parallel)$ does not vanish at $x$. 
\end{enumerate}

The paper is organized as follows. In \S 2 we review, following \cite{ELMNP}, the definition and
elementary properties of the asymptotic order function and of the non-nef locus. In \S 3 we 
recall the definition of test ideals and of its asymptotic version. We prove here that
given an arbitrary graded sequence of ideals, its asymptotic order of vanishing along a subvariety can be computed from the orders of vanishing of the corresponding sequence
of asymptotic test ideals. Section 4 contains the proof of our key technical result, 
Theorem A. Some applications to asymptotic test ideals and their $F$- jumping numbers  
are given in the following section. In \S 6 we deduce the results stated in Theorems B and C above, while the last section of the paper contains the description of the non-nef locus for 
pseudo-effective $\RR$-divisors.

\subsection*{Acknowledgment}
I am indebted to Karl Schwede for several inspiring conversations on the results of his 
preprint \cite{Schwede} and for  comments on a preliminary version of this note. I would also like to thank Rob Lazarsfeld for many discussions 
over the years on multiplier ideals and asymptotic invariants. Last but not least, I am grateful
to the anonymous referees for their comments.

\section{Non-nef loci and asymptotic orders of vanishing}

In this section we review, following \cite{ELMNP}, the definition of the non-nef locus and of the asymptotic order of vanishing of a divisor along a subvariety. Let $X$ be a smooth
variety\footnote{We assume that all varieties are irreducible and reduced.}
over an algebraically closed field $k$ (in this section we make no restriction on the characteristic). 

Recall that a \emph{graded sequence of ideals} on $X$ consists of a sequence
$\fra_{\bullet}=(\fra_m)_{m\geq 1}$ 
of ideals of $\cO_X$ (all ideals are assumed to be coherent) that satisfies
\begin{equation}\label{eq_graded_sequence}
\fra_{m_1}\cdot\fra_{m_2}\subseteq\fra_{m_1+m_2}
\end{equation}
for all $m_1,m_2\geq 1$. We assume that all our graded sequences are \emph{nonzero}, that is,
$\fra_m\neq 0$ for some $m\geq 1$. 

The most interesting examples of graded sequences arise as follows. Suppose that $X$ is 
complete, and that $D$ is a divisor on $X$ such that $\cO_X(D)$ has non-negative 
Iitaka dimension. Let $\fra_{|mD|}$ be the ideal defining the base locus of $\cO_X(mD)$, that is,
evaluation of sections induces a surjective map
$$H^0(X,\cO_X(mD))\otimes\cO_X\to\fra_{|mD|}\cdot \cO_X(mD).$$
In this case $\fra^D_{\bullet}=(\fra_{|mD|})_{m\geq 1}$ is a graded sequence of ideals
(note that some $\fra_{|mD|}$ is nonzero by the assumption on the Iitaka dimension of $\cO_X(D)$). 

Suppose now that $X$ is not necessarily complete and $Z$ is an irreducible proper closed subset of $X$. 
For a nonzero ideal $\fra$ on $Z$ we denote by $\ord_Z(\fra)$ the order of vanishing of $\fra$
along $Z$; in other words, if $\cO_{X,Z}$ is the local ring of $X$ at the generic point of $Z$,
having maximal ideal $\frm_Z$, then $\ord_Z(\fra)$ is the largest $r$ such that
$\fra\cdot\cO_{X,Z}\subseteq\frm_Z^r$. By convention, we put
$\ord_Z(0)=\infty$. It is clear that given two ideals $\fra$ and $\fra'$
on $X$, we have
\begin{equation}\label{ord_subadditive}
\ord_Z(\fra\cdot\fra')=\ord_Z(\fra)+\ord_Z(\fra').
\end{equation}

If $\fra_{\bullet}$ is a graded sequence of ideals on $X$, then the
\emph{asymptotic order of vanishing} of $\fra_{\bullet}$ along $Z$ is
$$\ord_Z(\fra_{\bullet}):=\inf_{m\geq 1}\frac{\ord_Z(\fra_m)}{m}.$$
It is easy to deduce from properties (\ref{eq_graded_sequence}) and (\ref{ord_subadditive})
that $\ord_Z(\fra_{\bullet})=\lim_{m\to\infty}\frac{\ord_Z(\fra_m)}{m}$, where the limit is over those
$m$ such that $\fra_m$ is nonzero (see for example \cite[Lemma~2.3]{JM}).

If $X$ is complete and $D$ is a divisor on $X$ such that $\cO_X(D)$ has non-negative
Iitaka dimension, then we consider the graded sequence $\fra^D_{\bullet}=
(\fra_{|mD|})_{m\geq 1}$. The asymptotic order of vanishing $\ord_Z(\fra^D_{\bullet})$ is denoted by $\ord_Z(\parallel D\parallel)$. 
Since $\ord_Z(\parallel D\parallel)$ is the limit of the corresponding normalized orders of vanishing, we have the equality $\ord_Z(\parallel mD\parallel)
=m\cdot\ord_Z(\parallel D\parallel)$ for every positive integer $m$.
Given a $\QQ$-divisor $D$ with $h^0(X,\cO_X(mD))\neq 0$
for some positive integer $m$ such that $mD$ has integer coefficients, we can therefore define 
$\ord_Z(\parallel D\parallel):=\frac{1}{m}\ord_Z(\parallel mD\parallel)$. It is clear that
this is well-defined and 
$\ord_Z(\parallel\lambda D\parallel)=\lambda\cdot\ord_Z(\parallel D\parallel)$ 
for every $\lambda\in\QQ_{\geq 0}$.

\begin{remark}
If $D$ and $E$ are divisors on $X$ such that both $\cO_X(D)$ and $\cO_X(E)$
have non-negative Iitaka dimension, we have
$\fra_{|mD|}\cdot\fra_{|mE|}\subseteq \fra_{|m(D+E)|}$
for every $m$.
This easily implies 
$$\ord_Z(\parallel D+E\parallel)\leq \ord_Z(\parallel D\parallel)+\ord_Z(\parallel E\parallel).$$
\end{remark}

We now turn to the definition of the stable base locus and of the non-nef locus.
Suppose that $X$ is a smooth projective variety. We denote by 
$\NS^1(X)_{\RR}$ the finite-dimensional real vector space of numerical equivalence classes
of $\RR$-divisors on $X$, and by
${\rm Big}(X)_{\RR}$ the \emph{big cone}, that is, the
open cone of big $\RR$-divisor classes. The closure of the big cone is the cone of
\emph{pseudo-effective} divisor classes.

For a divisor $D$ on $X$, we denote by 
${\rm Bs}(D)$ the base-locus of $|D|$ (with the reduced scheme structure). It is clear 
that for every positive integers $m$ and $r$, we have ${\rm Bs}(rmD)\subseteq {\rm Bs}(mD)$,
hence the Noetherian property implies that the intersection 
$\bigcap_{m\geq 1}{\rm Bs}(mD)$ is equal to ${\rm Bs}(\ell D)$ if $\ell$ is divisible enough. This is the \emph{stable base locus} of $D$, denoted by $\BB(D)$. 
It is clear that $\BB(D)=\BB(rD)$ for every positive integer $r$. Therefore we can define
$\BB(D)$ for a $\QQ$-divisor $D$ as $\BB(rD)$, where $r$ is any positive integer such that
$rD$ has integer coefficients.

Suppose now that $D$ is an $\RR$-divisor on $X$. The \emph{non-nef locus} of $D$
(called \emph{restricted base locus} in \cite{ELMNP}) is the union
$$\BB_-(D)=\bigcup_A\BB(D+A),$$
where the union is over all ample $\RR$-divisors $A$ such that $D+A$ is a $\QQ$-divisor. 
The properties in the following proposition are simple consequences of the definition,
see \cite[\S 1]{ELMNP}.

\begin{proposition}\label{properties_nonnef}
Let $D_1$ and $D_2$ be $\RR$-divisors on $X$.
\item[i)] $\BB_-(D_1)=\BB_-(\lambda D_1)$ for every $\lambda>0$.
\item[ii)] If $D_1$ and $D_2$ are numerically equivalent, then $\BB_-(D_1)=\BB_-(D_2)$. 
\item[iii)] The non-nef locus $\BB_-(D_1)$ is empty if and only if $D_1$ is nef.
\item[iv)] If $D_1$ is a $\QQ$-divisor, then $\BB_-(D_1)\subseteq\BB(D_1)$. 
\item[v)] We have $\BB_-(D_1+D_2)\subseteq\BB_-(D_1)\cup\BB_-(D_2)$.
\item[vi)] If $(A_m)_{m\geq 1}$ is a sequence of ample $\RR$-divisors with each $D+A_m$
having rational coefficients, and such that the classes of the $A_m$ converge to zero in $\NS^1(X)_{\RR}$,
then $\BB_-(D)=\bigcup_{m\geq 1}\BB(D+A_m)=\bigcup_{m\geq 1}\BB_-(D+A_m)$. 
\end{proposition}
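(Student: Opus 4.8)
The plan is to deduce all six parts from the definitions together with the elementary fact (noted just above) that $\mathrm{Bs}(rmD)\subseteq\mathrm{Bs}(mD)$, so that $\mathbb{B}(D)$ is a decreasing limit of base loci, and that $\mathbb{B}(D+A)\subseteq\mathbb{B}(D+A')$ whenever $A-A'$ is ample (since then $m(D+A)=m(D+A')+m(A-A')$ and adding the base-point-free-after-passing-to-a-multiple... actually adding an ample only shrinks the stable base locus). For (i), I would observe that scaling $D$ by $\lambda>0$ and simultaneously scaling each ample $A$ by $\lambda$ gives a bijection between the two families of $\QQ$-divisors $\{D+A\}$ and $\{\lambda D+A'\}$ appearing in the two unions, with $\mathbb{B}(\lambda D+\lambda A)=\mathbb{B}(D+A)$; hence the unions agree. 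Part (ii) is immediate from the definition, since whether $D+A$ is ample (resp.\ big, resp.\ a $\QQ$-divisor) and what $\mathbb{B}(D+A)$ is depends only on the numerical class when $A$ is ample — more precisely, if $D_1\equiv D_2$ then $D_1+A$ is a $\QQ$-divisor iff $D_2+(A+D_1-D_2)$ is, and these differ by the numerically trivial divisor $D_1-D_2$; but here one must be slightly careful, so instead I would use part (vi): both $\mathbb{B}_-(D_1)$ and $\mathbb{B}_-(D_2)$ equal $\bigcup_m\mathbb{B}(D_i+A_m)$ for a common sequence $A_m\to 0$ of ample $\QQ$-divisors chosen so that $D_1+A_m$ and $D_2+A_m$ are both rational, and the point is that $\mathbb{B}(D+A)$ depends only on the numerical class of the ample $\QQ$-divisor class $D+A$ — which is the genuinely substantive input and is the place I expect to lean on \cite[\S 1]{ELMNP}.

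For (iii), if $D$ is nef then $D+A$ is ample for every ample $A$, hence $D+A$ is semiample after scaling... no: an ample $\QQ$-divisor has $\mathbb{B}(D+A)=\emptyset$ (some multiple is very ample), so the union is empty; conversely, if $\mathbb{B}_-(D)=\emptyset$ then $D+A$ is "almost base-point free" for arbitrarily small ample $A$, and since $D+A$ is at least big with empty stable base locus one concludes $D$ lies in the closure of the nef cone, i.e.\ $D$ is nef. (This converse direction is where one uses that the nef cone is closed and that $D+A$ nef for all small ample $A$ forces $D$ nef; the precise argument is in \cite{ELMNP}.) Part (iv): if $D$ is a $\QQ$-divisor, then for ample $A$ with $D+A$ rational, $\mathbb{B}(D+A)\subseteq\mathbb{B}(D)$ because $m(D+A)=mD+mA$ and adding the effective-after-a-multiple ample $mA$ only removes base points; taking the union over $A$ keeps us inside $\mathbb{B}(D)$. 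Part (v): given ample $A$ with $D_1+D_2+A$ rational, write $A=A_1+A_2$ with $A_i$ ample and $D_i+A_i$ rational; then $(D_1+A_1)+(D_2+A_2)=D_1+D_2+A$, and $\mathbb{B}$ of a sum is contained in the union of the $\mathbb{B}$'s (again since the sum of the base loci contains the base locus of the sum, for the corresponding multiples), so $\mathbb{B}(D_1+D_2+A)\subseteq\mathbb{B}(D_1+A_1)\cup\mathbb{B}(D_2+A_2)\subseteq\mathbb{B}_-(D_1)\cup\mathbb{B}_-(D_2)$; take the union over $A$.

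Part (vi) is the crux and the one I would do most carefully. The inclusions $\bigcup_m\mathbb{B}(D+A_m)\subseteq\bigcup_m\mathbb{B}_-(D+A_m)\subseteq\mathbb{B}_-(D)$ follow from (iv) applied to $D+A_m$ and from the observation that $\mathbb{B}_-(D+A_m)\subseteq\mathbb{B}_-(D)$ (any ample $A'$ with $D+A_m+A'$ rational is of the form $A_m+A'$, an allowed perturbation of $D$). For the reverse inclusion $\mathbb{B}_-(D)\subseteq\bigcup_m\mathbb{B}(D+A_m)$, take any ample $\QQ$-Cartier-after-scaling $A$ with $D+A$ a $\QQ$-divisor and any $x\in\mathbb{B}(D+A)$; since the classes of $A_m$ tend to $0$, for $m\gg 0$ the class $A-A_m$ is still ample, so $A=A_m+(A-A_m)$ expresses $A$ as an ample perturbation of $A_m$, whence $\mathbb{B}(D+A)\supseteq\mathbb{B}((D+A_m)+(A-A_m))$... — wait, that inclusion goes the wrong way; rather $\mathbb{B}(D+A)=\mathbb{B}((D+A_m)+(A-A_m))\subseteq\mathbb{B}(D+A_m)$ since we are adding an ample to $D+A_m$. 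Hence $x\in\mathbb{B}(D+A_m)$ for all large $m$. The only subtlety is rationality: one needs $D+A_m$ and the intermediate divisors to have rational coefficients, which is arranged by hypothesis on the $A_m$ and by choosing the auxiliary decompositions over $\QQ$; I expect the bookkeeping of "which divisor is rational" to be the main (though routine) obstacle, exactly as in \cite[\S 1]{ELMNP}.
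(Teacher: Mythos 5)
The serious problem is part (ii). The route you say you would actually use fails twice over: first, you cannot in general choose a single ample $A_m$ with both $D_1+A_m$ and $D_2+A_m$ rational, since $(D_1+A_m)-(D_2+A_m)=D_1-D_2$ is only a numerically trivial $\RR$-divisor and need not have rational coefficients (take $D_1=0$ and $D_2=\sqrt{2}\,N$ with $N$ a nonzero numerically trivial divisor); second, and more importantly, the ``substantive input'' you invoke --- that $\BB(D+A)$ depends only on the numerical class of the $\QQ$-divisor $D+A$ --- is false: stable base loci are \emph{not} numerical invariants (the paper's introduction stresses exactly this, and it is the very reason $\BB_-$ is introduced), and here $D+A$ is merely an ample perturbation of an arbitrary $\RR$-divisor, not itself ample. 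The correct argument is the one you started and then abandoned: for ample $A$ with $D_1+A$ a $\QQ$-divisor, set $A':=A+(D_1-D_2)$; then $A'\equiv A$ is ample because ampleness is a numerical property, and $D_2+A'=D_1+A$, so the unions defining $\BB_-(D_1)$ and $\BB_-(D_2)$ run over literally the same family of $\QQ$-divisors, and no invariance property of $\BB$ is needed.

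Two further gaps. In (i), the bijection $A\mapsto\lambda A$ only works for $\lambda\in\QQ$: if $\lambda$ is irrational and $D_1+A$ is a nonzero $\QQ$-divisor, then $\lambda D_1+\lambda A=\lambda(D_1+A)$ is not a $\QQ$-divisor, so $\lambda A$ is not an admissible perturbation of $\lambda D_1$. To repair it, given ample $A'$ with $\lambda D_1+A'$ rational, pick $\mu\in\QQ_{>0}$ close to $\lambda$ and write $\tfrac{1}{\mu}(\lambda D_1+A')=D_1+\bigl((\tfrac{\lambda}{\mu}-1)D_1+\tfrac{1}{\mu}A'\bigr)$; for $\mu$ close enough the bracketed $\RR$-divisor is ample (openness of the ample cone), $D_1$ plus it is rational, and $\BB$ is unchanged under rescaling by $\tfrac{1}{\mu}\in\QQ_{>0}$, giving $\BB_-(\lambda D_1)\subseteq\BB_-(D_1)$; symmetry finishes. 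In (vi), you justify $\bigcup_m\BB(D+A_m)\subseteq\bigcup_m\BB_-(D+A_m)$ by ``(iv) applied to $D+A_m$'', but (iv) gives the \emph{opposite} inclusion $\BB_-(D+A_m)\subseteq\BB(D+A_m)$, and the termwise inclusion $\BB(E)\subseteq\BB_-(E)$ is false in general. What you need is cheaper: $\BB(D+A_m)\subseteq\BB_-(D)$ is immediate from the definition of $\BB_-(D)$, and $\BB_-(D)\subseteq\bigcup_m\BB_-(D+A_m)$ follows from the very computation in your reverse inclusion, since there $A-A_m$ is an ample $\QQ$-divisor perturbing $D+A_m$, so $\BB(D+A)\subseteq\BB_-(D+A_m)$ as well as $\BB(D+A)\subseteq\BB(D+A_m)$. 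Parts (iii), (iv), (v) and your reverse inclusion in (vi) are essentially correct (in (iii) the point is that $\BB(D+A)=\emptyset$ means some multiple of $D+A$ is base-point free, hence $D+A$ is nef; bigness plays no role). Note the paper itself gives no proof, referring to \cite[\S 1]{ELMNP}; the ELMNP arguments are along the corrected lines above.
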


It is not known whether $\BB_-(D)$ is always a Zariski closed subset of $X$, though
property vi) above shows that it is a countable union of closed subsets. This property also implies
that if the ground field $k$  is uncountable, then $\BB_-(D)=X$ if and only if $D$ is not pseudo-effective.

\section{Asymptotic test ideals}

We start by reviewing the definition of test ideals.
These ideals have been introduced and studied in \cite{HY}.
 Since we  only deal with smooth varieties, we use an alternative definition from \cite{BMS},
 which is more suitable for our applications (this description goes back to 
 \cite[Lemma~2.1]{HT}). 
Suppose that
$X$ is a smooth $n$-dimensional variety over an algebraically closed field $k$ of characteristic $p>0$ (in fact, for what follows it is enough to assume $k$ perfect). Let $\omega_X$
denote the sheaf of $n$-forms on $X$.
We denote by $F\colon X\to X$ the Frobenius morphism, that is given by the identity on the topological space, and by taking the $p$-power on regular functions. 

The key object is the \emph{trace} map ${\rm Tr}={\rm Tr}_X\colon F_*(\omega_X)\to\omega_X$. This is a surjective map that can be either defined as a trace map for duality with respect to $F$, or as coming from the Cartier isomorphism. Given algebraic coordinates $x_1,\ldots,x_n$ on an open subset $U$ of $X$, the trace map is characterized by
$${\rm Tr}(x_1^{i_1}\cdots x_n^{i_n}dx_1\wedge\cdots\wedge dx_n)=
x_1^{\frac{i_1-p+1}{p}}\cdots x_n^{\frac{i_n-p+1}{p}}dx_1\wedge\cdots\wedge dx_n,$$
where the monomial on the right-hand side is understood to be zero if one of the exponents is not an integer.
Iterating this map $e$ times we obtain a surjective map ${\rm Tr}^e\colon F^e_*(\omega_X)\to\omega_X$.

Given an ideal $\frb$ in $\cO_X$ and $e\geq 1$, the image ${\rm Tr}^e(\frb\cdot\omega_X)$
can be written as $\frb^{[1/p^e]} \cdot\omega_X$ for some ideal $\frb^{[1/p^e]}$ in $\cO_X$.
This is not the definition in \cite{BMS}, but it can be easily seen to be equivalent to the definition
therein via \cite[Proposition~2.5]{BMS}. For example, when $\fra$ is the ideal defining a smooth divisor $E$ on $X$, we have $(\fra^m)^{[1/p^e]}=\cO_X(-\lfloor m/p^e\rfloor E)$, 
where $\lfloor u\rfloor$
 denotes the largest integer $\leq u$.

Given a nonzero ideal $\fra$ on $X$ and $\lambda\in\RR_{\geq 0}$, one shows that
$$\left(\fra^{\lceil\lambda p^e\rceil}\right)^{[1/p^e]}\subseteq
\left(\fra^{\lceil\lambda p^{e+1}\rceil}\right)^{[1/p^{e+1}]}$$
for every $e\geq 1$. Here we put $\lceil u\rceil$ for the smallest integer $\geq u$.
By the Noetherian property, there is an ideal $\tau(\fra^{\lambda})$, the
\emph{test ideal} of $\fra$ of exponent $\lambda$, that is equal to
 $\left(\fra^{\lceil\lambda p^e\rceil}\right)^{[1/p^e]}$ for $e\gg 0$. 
 One can show that if $r$ is a positive integer, then $\tau(\fra^{r\lambda})=
 \tau((\fra^r)^{\lambda})$. Furthermore, we have $\fra\subseteq\tau(\fra)$.

 Test ideals share many of the properties of the multiplier ideals. 
 If $\fra\subseteq\frb$, then $\tau(\fra^{\lambda})\subseteq\tau(\frb^{\lambda})$ for every 
 $\lambda$. We have 
 $\tau(\fra^{\lambda})=\cO_X$ if $0\leq\lambda\ll 1$. For $\lambda>\mu$
 we have $\tau(\fra^{\lambda})\subseteq\tau(\fra^{\mu})$. Given $\lambda\geq 0$, there is
 $\epsilon>0$ such that $\tau(\fra^{\lambda})=\tau(\fra^{\mu})$ for $\mu$ with
 $\lambda\leq \mu\leq\lambda+\epsilon$. One says that $\lambda>0$ is an 
 $F$-\emph{jumping number} of $\fra$ if $\tau(\fra^{\lambda})\neq\tau(\fra^{\lambda'})$
 for every $\lambda'<\lambda$. It is known that the set of $F$-jumping numbers of
 $\fra$ is a discrete set of rational numbers. For the proof of all these properties, we refer to 
 \cite{BMS}. 
 
 A nice feature of the theory is that
 the Subadditivity Theorem for multiplier ideals (see \cite[Theorem~9.5.20]{positivity})
 has an analogue in this setting. This says that for every nonzero ideals $\fra$ and $\frb$
 and every $\lambda\geq 0$, we have
 \begin{equation}\label{subadditivity}
 \tau((\fra\cdot\frb)^{\lambda})\subseteq\tau(\fra^{\lambda})\cdot\tau(\frb^{\lambda}).
 \end{equation}
 In particular, we have $\tau(\fra^{m\lambda})\subseteq\tau(\fra^{\lambda})^m$
 for every positive integer $m$.
 For a proof see \cite[Proposition~2.11]{BMS}. 
 
 One can similarly define a mixed test ideal: given nonzero ideals $\fra$ and $\frb$
 in $\cO_X$ and $\lambda,\mu\in\RR_{\geq 0}$, there is an ideal $\tau(\fra^{\lambda}\frb^{\mu})$
 that is equal to $(\fra^{\lceil\lambda p^e\rceil}\frb^{\lceil\mu p^e\rceil})^{[1/p^e]}$
 for $e\gg 0$. One can show that for every $\lambda$ and $\mu$, there is $\epsilon>0$ such that
 $\tau(\fra^{\lambda}\frb^{\mu})=\tau(\fra^{\lambda'}\frb^{\mu'})$ if $\lambda\leq\lambda'\leq
 \lambda+\epsilon$ and $\mu\leq\mu'\leq\mu+\epsilon$ (this follows by adapting the argument
 in the non-mixed case, see the proof of \cite[Proposition~2.14]{BMS}).
 The notion of mixed test ideals will only
 come up in the proof of Proposition~\ref{asymptotic_test} iv) below.

It is straightforward to define an asymptotic version of test ideals, proceeding in the same way as in the case of multiplier ideals (this has been noticed already in \cite{Hara}, where the construction was used to compare symbolic powers with usual powers of ideals in positive characteristic). Suppose that
$\fra_{\bullet}$ is a graded sequence of ideals on $X$ (recall that we always assume
that some $\fra_m$ is nonzero) and $\lambda\in\RR_{\geq 0}$. If $m$ and $r$ are positive integers
such that $\fra_m$ is nonzero, then
$$\tau(\fra_m^{\lambda/m})=\tau((\fra_m^r)^{\lambda/mr})\subseteq
\tau(\fra_{mr}^{\lambda/mr}),$$
where the inclusion follows from $\fra_m^r\subseteq\fra_{mr}$. It follows from the Noetherian
property that there is a unique ideal $\tau(\fra_{\bullet}^{\lambda})$ such that
$\tau(\fra_m^{\lambda/m})\subseteq \tau(\fra_{\bullet}^{\lambda})$
for all $m$ (such that $\fra_m$ is nonzero), with equality if $m$ is divisible enough.
This is the \emph{asymptotic test ideal} of $\fra_{\bullet}$ of exponent $\lambda$. 
We collect in the next proposition some easy properties of asymptotic test ideals.
The proof follows the case of multiplier ideals (see \cite[\S 11.2]{positivity}).

\begin{proposition}\label{asymptotic_test}
Let $\fra_{\bullet}$ and $\frb_{\bullet}$ be two graded sequences of ideals on $X$.
\begin{enumerate}
\item[i)] We have $\tau(\fra_{\bullet}^{\lambda})\subseteq\tau(\fra_{\bullet}^{\mu})$ for every
$\lambda\geq \mu$.
\item[ii)] We have $\tau(\fra_{\bullet}^{m\lambda})\subseteq\tau(\fra_{\bullet}^{\lambda})^m$
for all positive integers $m$.
\item[iii)] For every $\lambda\in\RR_{\geq 0}$, there is $\epsilon>0$ such that
$\tau(\fra_{\bullet}^{\lambda})=\tau(\fra_{\bullet}^{\mu})$ for all $\mu$ with 
$\lambda\leq\mu\leq\lambda+\epsilon$.
\item[iv)] If there is a nonzero ideal $\frc$ on $X$ such that $\frc\cdot\fra_m\subseteq\frb_m$ for all $m\gg 0$, then $\tau(\fra_{\bullet}^{\lambda})\subseteq\tau(\frb_{\bullet}^{\lambda})$ for all $\lambda\in\RR_{\geq 0}$. 
\end{enumerate}
\end{proposition}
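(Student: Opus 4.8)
The plan is to prove the four parts of Proposition~\ref{asymptotic_test} by reducing each assertion to the corresponding statement for ordinary test ideals, using the defining property that $\tau(\fra_{\bullet}^{\lambda}) = \tau(\fra_m^{\lambda/m})$ once $m$ is divisible enough, and that the ideals $\tau(\fra_m^{\lambda/m})$ form a directed family (with respect to divisibility of $m$).

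\emph{Part i).} This is immediate from monotonicity of ordinary test ideals in the exponent: for $\lambda \geq \mu$ and $m$ divisible enough so that $\tau(\fra_{\bullet}^{\lambda}) = \tau(\fra_m^{\lambda/m})$ and $\tau(\fra_{\bullet}^{\mu}) = \tau(\fra_m^{\mu/m})$ simultaneously, one has $\lambda/m \geq \mu/m$, hence $\tau(\fra_m^{\lambda/m}) \subseteq \tau(\fra_m^{\mu/m})$.

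\emph{Part ii).} Fix a positive integer $m$. Choose $r$ divisible enough that $\tau(\fra_{\bullet}^{m\lambda}) = \tau(\fra_r^{m\lambda/r})$ and $\tau(\fra_{\bullet}^{\lambda}) = \tau(\fra_r^{\lambda/r})$. Apply the Subadditivity Theorem (\ref{subadditivity}), in the form $\tau(\fra^{m\mu}) \subseteq \tau(\fra^{\mu})^m$, with $\fra = \fra_r$ and $\mu = \lambda/r$, to get $\tau(\fra_r^{m\lambda/r}) \subseteq \tau(\fra_r^{\lambda/r})^m$, and conclude.

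\emph{Part iii).} Apply part i) together with the corresponding right-continuity statement for ordinary test ideals. Pick $m$ divisible enough so that $\tau(\fra_{\bullet}^{\lambda}) = \tau(\fra_m^{\lambda/m})$, and let $\epsilon_0 > 0$ be such that $\tau(\fra_m^{\nu}) = \tau(\fra_m^{\lambda/m})$ for $\lambda/m \leq \nu \leq \lambda/m + \epsilon_0$. Set $\epsilon = m\epsilon_0$. For $\mu$ with $\lambda \leq \mu \leq \lambda + \epsilon$ we have, by the definition of asymptotic test ideals, $\tau(\fra_m^{\mu/m}) \subseteq \tau(\fra_{\bullet}^{\mu})$; but the left side equals $\tau(\fra_m^{\lambda/m}) = \tau(\fra_{\bullet}^{\lambda})$, so $\tau(\fra_{\bullet}^{\lambda}) \subseteq \tau(\fra_{\bullet}^{\mu})$, and the reverse inclusion is part i).

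\emph{Part iv).} This is the main obstacle, and it is where mixed test ideals enter. Suppose $\frc \cdot \fra_m \subseteq \frb_m$ for all $m \gg 0$, and fix $\lambda$. For $m$ divisible and large enough that $\tau(\fra_{\bullet}^{\lambda}) = \tau(\fra_m^{\lambda/m})$, I want to compare $\tau(\fra_m^{\lambda/m})$ with $\tau(\frb_m^{\lambda/m}) \subseteq \tau(\frb_{\bullet}^{\lambda})$. The naive approach fails because multiplying $\fra_m$ by the fixed ideal $\frc$ does not simply shift the exponent; instead I would introduce the mixed test ideal $\tau(\fra_m^{\lambda/m}\, \frc^{\,\delta})$ for a small parameter $\delta = \delta(m)$. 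The key points are: first, by the inclusion $\frc \cdot \fra_m \subseteq \frb_m$ one has, for suitable choices, $\tau((\frc \cdot \fra_m)^{\lambda/m}) \subseteq \tau(\frb_m^{\lambda/m})$, and by subadditivity $\tau((\frc \cdot \fra_m)^{\lambda/m}) \supseteq$ has a controlled relation to $\tau(\fra_m^{\lambda/m}\,\frc^{\lambda/m})$; second, since $\frc$ is a \emph{fixed} nonzero ideal, the mixed test ideals $\tau(\fra_m^{\lambda/m}\,\frc^{\,\delta})$ satisfy $\tau(\fra_m^{\lambda/m}\,\frc^{\,\delta}) = \tau(\fra_m^{\lambda/m})$ for $0 \leq \delta \ll 1$, by the right-continuity of mixed test ideals in the $\frc$-exponent recalled above (from the proof of \cite[Proposition~2.14]{BMS}); third, as $m \to \infty$ the relevant $\frc$-exponent $\lambda/m$ tends to $0$, so for $m$ large it falls into the range where the mixed test ideal is insensitive to the $\frc$-factor. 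Assembling these: for $m$ large, $\tau(\fra_m^{\lambda/m}) = \tau(\fra_m^{\lambda/m}\,\frc^{\lambda/m}) \supseteq \tau((\frc\cdot\fra_m)^{\lambda/m})' $ in the appropriate sense $\subseteq \tau(\frb_m^{\lambda/m}) \subseteq \tau(\frb_{\bullet}^{\lambda})$; taking $m$ divisible enough on the outer left gives $\tau(\fra_{\bullet}^{\lambda}) \subseteq \tau(\frb_{\bullet}^{\lambda})$. The delicate step — and the one I would write most carefully — is reconciling the mixed test ideal $\tau(\fra_m^{\lambda/m}\,\frc^{\lambda/m})$ with $\tau((\frc \cdot \fra_m)^{\lambda/m})$; this is handled by observing $\frc^{\lceil\nu p^e\rceil} \cdot \fra_m^{\lceil\nu p^e\rceil} \subseteq (\frc\cdot\fra_m)^{\lceil\nu p^e\rceil}$ at finite level, so $\tau((\frc\cdot\fra_m)^{\nu}) \supseteq \tau(\frc^{\nu}\fra_m^{\nu})$, which combined with $\tau((\frc\cdot\fra_m)^\nu)\subseteq \tau(\frb_m^\nu)$ (from $\frc\cdot\fra_m\subseteq\frb_m$ and monotonicity) gives $\tau(\fra_m^{\nu}) = \tau(\frc^{\nu}\fra_m^{\nu}) \subseteq \tau(\frb_m^{\nu})$ for $\nu = \lambda/m$ small, as desired.
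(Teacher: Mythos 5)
Parts i)--iii) of your proposal are correct and are essentially the paper's own (very terse) argument: i) and ii) reduce to monotonicity and to the Subadditivity Theorem for ordinary test ideals at a single level computing both asymptotic ideals, and your iii) is exactly the paper's proof (right-continuity of $\mu\mapsto\tau(\fra_m^{\mu})$ at $\lambda/m$, combined with part i)).

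Part iv) contains a genuine gap at its crucial step, namely the asserted equality $\tau(\fra_m^{\lambda/m}\frc^{\lambda/m})=\tau(\fra_m^{\lambda/m})$ for $m$ large and divisible. The right-continuity of mixed test ideals gives, for the \emph{fixed} ideal $\fra_m$ and \emph{fixed} exponent $\lambda/m$, some $\epsilon>0$ with $\tau(\fra_m^{\lambda/m}\frc^{\delta})=\tau(\fra_m^{\lambda/m})$ for $0\leq\delta\leq\epsilon$; but this $\epsilon$ depends on $\fra_m$ and on $\lambda/m$, hence on $m$. Your argument needs $\lambda/m\leq\epsilon(m)$ for suitable large divisible $m$, and nothing you say prevents $\epsilon(m)$ from shrinking faster than $\lambda/m$: smallness of the $\frc$-exponent alone, without fixing the other ideal and its exponent, does not guarantee insensitivity. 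For instance, with $\fra=\frc=(x)$ on $\AAA^1$ one has $\tau(\fra^{\nu}\frc^{\delta})=(x)\neq\cO_X=\tau(\fra^{\nu})$ whenever $1-\delta\leq\nu<1$, no matter how small $\delta$ is; so the ``safe range'' genuinely moves with the pair, and in your argument the pair $(\fra_m,\lambda/m)$ moves with $m$. The paper avoids exactly this by decoupling the two exponents with a second parameter: fix one $m$ with $\tau(\fra_{\bullet}^{\lambda})=\tau(\fra_m^{\lambda/m})$, so that the threshold $\epsilon$ for the pair $(\fra_m,\lambda/m)$ is a fixed positive number, and then take $\ell\gg 0$ so that $\lambda/(m\ell)\leq\epsilon$ and $\frc\cdot\fra_{m\ell}\subseteq\frb_{m\ell}$; using $\fra_m^{\ell}\subseteq\fra_{m\ell}$ this yields
$$\tau(\fra_{\bullet}^{\lambda})=\tau(\fra_m^{\lambda/m})=\tau(\frc^{\lambda/m\ell}\fra_m^{\lambda/m})=\tau((\frc\fra_m^{\ell})^{\lambda/m\ell})\subseteq\tau((\frc\fra_{m\ell})^{\lambda/m\ell})\subseteq\tau(\frb_{m\ell}^{\lambda/m\ell})\subseteq\tau(\frb_{\bullet}^{\lambda}).$$
If you insist on your one-parameter formulation, you would have to prove the uniform statement for the specific family $(\fra_m,\lambda/m)$, which you have not supplied; the two-parameter version is the intended fix. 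Two smaller points: the invocation of ``subadditivity'' for $\tau((\frc\cdot\fra_m)^{\lambda/m})$ goes in the wrong direction (subadditivity gives the opposite containment); what you actually need, and what is true, is the elementary identity $(\frc\fra_m)^{k}=\frc^{k}\fra_m^{k}$, giving $\tau((\frc\fra_m)^{\nu})=\tau(\frc^{\nu}\fra_m^{\nu})$, and the final chain ``$\supseteq\ldots$ in the appropriate sense $\subseteq\ldots$'' should be written out with all containments pointing from $\tau(\fra_{\bullet}^{\lambda})$ into $\tau(\frb_{\bullet}^{\lambda})$, as above.
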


\begin{proof}
The assertions in i) and ii) follow from the definition of asymptotic test ideals, using 
the corresponding properties in the case of test ideals. For iii), let $m$ be such that
$\tau(\fra_{\bullet}^{\lambda})=\tau(\fra_m^{\lambda/m})$. There is $\epsilon>0$ such that
$\tau(\fra_m^{\lambda/m})=\tau(\fra_m^{(\lambda+\epsilon)/m})\subseteq\tau(\fra_{\bullet}^{\lambda+\epsilon})$, which proves iii).

For iv), given $\lambda\in\RR_{\geq 0}$ let us choose $m$ such that 
$\tau(\fra_{\bullet}^{\lambda})=\tau(\fra_m^{\lambda/m})$. If $\ell\gg 0$, then
$$\tau(\fra_{\bullet}^{\lambda})=\tau(\fra_m^{\lambda/m})=\tau(\frc^{\lambda/m\ell}\fra_m^{\lambda/m})=\tau((\frc\fra_m^{\ell})^{\lambda/m\ell})\subseteq\tau((\frc\fra_{m\ell})^{\lambda/m\ell})
\subseteq\tau(\frb_{m\ell}^{\lambda/m\ell})\subseteq\tau(\frb_{\bullet}^{\lambda}).$$
\end{proof}

We say that $\lambda>0$ is an $F$-jumping number of $\fra_{\bullet}$ if 
$\tau(\fra_{\bullet}^{\lambda})\neq\tau(\fra_{\bullet}^{\mu})$ for every $\mu<\lambda$.
We will see in \S 5 that if $\fra_{\bullet}$ is associated to a divisor on a projective variety, then
the set of $F$-jumping numbers of $\fra_{\bullet}$ is discrete. 

If $\fra_{\bullet}$ is a graded sequence as above, we also consider 
$\frb_{\bullet}=(\frb_m)_{m\geq 1}$, where $\frb_m=\tau(\fra_{\bullet}^m)$.
Note that $\fra_m\subseteq\tau(\fra_m)\subseteq\frb_m$ for every $m$. 
The sequence $\frb_{\bullet}$ is not a graded sequence, but Proposition~\ref{asymptotic_test} ii) implies that
$\frb_{mr}\subseteq\frb_m^r$ for every $m,r\geq 1$. Furthermore, we have
$\frb_{m_1}\subseteq\frb_{m_2}$ for $m_1>m_2$. It is easy to deduce that if $Z$ is an irreducible 
proper closed subset of $X$, then
$$\ord_Z(\frb_{\bullet}):=\sup_m\frac{\ord_Z(\frb_m)}{m}=\lim_{m\to\infty}
\frac{\ord_Z(\frb_m)}{m}$$
(see \cite[Lemma~2.6]{JM}). 

\begin{proposition}\label{compute_test}
If $\fra_{\bullet}$ is a graded sequence of ideals on $X$ and $\frb_{\bullet}$ is the corresponding
sequence of asymptotic test ideals, then for every irreducible proper closed subset $Z$ of $X$,
we have $\ord_Z(\fra_{\bullet})=\ord_Z(\frb_{\bullet})$. 
\end{proposition}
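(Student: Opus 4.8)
The plan is to prove separately the inequalities $\ord_Z(\frb_\bullet)\le\ord_Z(\fra_\bullet)$ and $\ord_Z(\fra_\bullet)\le\ord_Z(\frb_\bullet)$; set $\delta:=\ord_Z(\fra_\bullet)$ and $c:=\codim_X(Z)$, so that $R:=\cO_{X,Z}$ is a regular local ring of dimension $c$ with maximal ideal $\frm_Z$. Since $\ord_Z(\fra_\bullet)=\lim_{m\to\infty}\ord_Z(\fra_m)/m$, the limit being over those $m$ with $\fra_m\ne 0$, and $\ord_Z(\frb_\bullet)=\lim_{m\to\infty}\ord_Z(\frb_m)/m$, it suffices throughout to compare these two sequences of normalized orders of vanishing. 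The first inequality is essentially immediate: for every $m$ with $\fra_m\ne 0$ we have $\fra_m\subseteq\tau(\fra_m)\subseteq\tau(\fra_\bullet^m)=\frb_m$ (the last inclusion being the case $k=m$ of the defining property of the asymptotic test ideal), so $\ord_Z(\frb_m)\le\ord_Z(\fra_m)$, and dividing by $m$ and passing to the limit gives $\ord_Z(\frb_\bullet)\le\ord_Z(\fra_\bullet)$.

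The reverse inequality is the substance of the statement, and it rests on the following local estimate, which plays in positive characteristic the role of the lower bound on the order of vanishing of a multiplier ideal used in \cite{ELMNP}: \emph{if $\frc$ is a nonzero ideal of $R$ whose $\frm_Z$-adic order equals $s$, then for every $e\ge 1$ the $\frm_Z$-adic order of $\frc^{[1/p^e]}$ is at least $s/p^e-c-1$.} To prove this, fix a regular system of parameters $y_1,\dots,y_c$ of $R$, so that $\frm_Z^t$ is generated by the monomials $y^\beta$ with $|\beta|=t$ and $(\frm_Z^t)^{[p^e]}$ is generated by the monomials $y^{p^e\beta}$ with $|\beta|=t$. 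If $\gamma\in\NN^c$ has $|\gamma|\ge tp^e+c(p^e-1)$, then $\sum_i\lfloor \gamma_i/p^e\rfloor\ge(|\gamma|-c(p^e-1))/p^e\ge t$, so we may choose $\beta$ with $\beta_i\le\lfloor \gamma_i/p^e\rfloor$ and $|\beta|=t$, and then $y^\gamma$ is a multiple of $y^{p^e\beta}$; hence $\frm_Z^N\subseteq(\frm_Z^t)^{[p^e]}$ whenever $N\ge tp^e+c(p^e-1)$. If $s\ge c(p^e-1)$, take $t=\lfloor(s-c(p^e-1))/p^e\rfloor\ge 0$; then $s\ge tp^e+c(p^e-1)$, so $\frc\subseteq\frm_Z^s\subseteq(\frm_Z^t)^{[p^e]}$, whence $\frc^{[1/p^e]}\subseteq\frm_Z^t$ because $\frc^{[1/p^e]}$ is the smallest ideal $J$ with $\frc\subseteq J^{[p^e]}$ (see \cite{BMS}), and the $\frm_Z$-adic order of $\frc^{[1/p^e]}$ is therefore at least $t\ge s/p^e-c-1$; if instead $s<c(p^e-1)$, then $s/p^e-c-1<0$ and the bound is trivial.

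To conclude, fix $m\ge 1$ and, using the definition of the asymptotic test ideal, choose $k$ divisible enough (in particular $\fra_k\ne 0$) that $\frb_m=\tau(\fra_\bullet^m)=\tau(\fra_k^{m/k})$, and then $e\gg 0$ so that $\tau(\fra_k^{m/k})=(\fra_k^N)^{[1/p^e]}$ with $N=\lceil mp^e/k\rceil$. Since $\fra_k^N\cO_{X,Z}$ has $\frm_Z$-adic order $N\cdot\ord_Z(\fra_k)$ by (\ref{ord_subadditive}), and since the formation of $(-)^{[1/p^e]}$ commutes with localization, the local estimate applied to $\frc=\fra_k^N\cO_{X,Z}$ yields
\begin{align*}
\ord_Z(\frb_m)
&\ge \frac{N\cdot\ord_Z(\fra_k)}{p^e}-c-1
\ge \frac{mp^e}{k}\cdot\frac{\ord_Z(\fra_k)}{p^e}-c-1\\
&= m\cdot\frac{\ord_Z(\fra_k)}{k}-c-1
\ge m\delta-c-1,
\end{align*}
the last inequality using $\delta=\inf_k\ord_Z(\fra_k)/k$. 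Dividing by $m$ and letting $m\to\infty$ gives $\ord_Z(\frb_\bullet)\ge\delta=\ord_Z(\fra_\bullet)$, which together with the first inequality completes the proof.

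The main obstacle is the local estimate on the $\frm_Z$-adic order of $\frc^{[1/p^e]}$; once it is available, everything else is a routine unwinding of the definitions of test ideals and their asymptotic versions. The decisive feature of that estimate is that the error term $c+1$ is a constant, independent of $m$, $k$ and $e$, so that it disappears after dividing by $m$ and passing to the limit — which is exactly what makes the statement an equality of asymptotic invariants.
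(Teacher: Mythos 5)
Your proof is correct, and while the overall architecture matches the paper's (the easy inequality $\ord_Z(\frb_\bullet)\le\ord_Z(\fra_\bullet)$ from $\fra_m\subseteq\tau(\fra_m)\subseteq\frb_m$, plus a lower bound on the order of a test ideal with an error term depending only on $\codim(Z,X)$, which washes out after dividing by $m$), the key estimate is proved by a genuinely different route. The paper establishes Proposition~\ref{estimate_order}, namely $\ord_Z(\tau(\fra^{\lambda}))>\lambda\cdot\ord_Z(\fra)-\codim(Z,X)$, by shrinking so that $Z$ is smooth, blowing up along $Z$, using $K_{Y/X}=(c-1)E$ and the compatibility of the trace maps under $\pi$ to get $(\fra^m)^{[1/p^e]}\subseteq\pi_*(\cO_Y(K_{Y/X})\cdot(\frb^m)^{[1/p^e]})$. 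You instead work purely locally in the regular local ring $\cO_{X,Z}$: the monomial computation showing $\frm_Z^N\subseteq(\frm_Z^t)^{[p^e]}$ whenever $N\ge tp^e+c(p^e-1)$, combined with the characterization of $\frc^{[1/p^e]}$ as the smallest ideal $J$ with $\frc\subseteq J^{[p^e]}$ (which is the actual definition in \cite{BMS}, declared equivalent to the trace description in the paper) and the fact that $(-)^{[1/p^e]}$ commutes with localization. Your approach is more elementary — no blow-up, no relative canonical divisor, no trace functoriality — at the cost of a marginally weaker constant ($c+1$ instead of the paper's $c$), which is immaterial for the asymptotic statement; the paper's route has the side benefit of isolating the sharper Proposition~\ref{estimate_order}, the positive-characteristic avatar of ``the test ideal is contained in the multiplier ideal,'' as a statement of independent interest. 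All the bookkeeping in your reduction (choice of $k$ with $\frb_m=\tau(\fra_k^{m/k})$, then $e\gg 0$ with $N=\lceil mp^e/k\rceil$, additivity of $\ord_Z$ on products) is accurate and matches the definitions in the paper.
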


\begin{proof}
Since $\fra_m\subseteq\frb_m$ for every $m$, it is clear that we have 
$\ord_Z(\fra_m)\geq\ord_Z(\frb_m)$ for all $m$, hence $\ord_Z(\fra_{\bullet})\geq
\ord_Z(\frb_{\bullet})$. In order to prove the reverse inequality, given $m\geq 1$, let us choose
$r$ such that $\frb_m=\tau(\fra_{mr}^{1/r})$. It follows from Proposition~\ref{estimate_order}
below that 
$$\ord_Z(\frb_{\bullet})\geq \frac{\ord_Z(\frb_m)}{m}>\frac{\ord_Z(\fra_{mr})}{mr}-
\frac{{\rm codim}(Z,X)}{m}\geq\ord_Z(\fra_{\bullet})-\frac{{\rm codim(Z,X})}{m}.$$
Since this holds for every $m\geq 1$, we deduce that $\ord_Z(\frb_{\bullet})\geq
\ord_Z(\fra_{\bullet})$, which completes the proof of the proposition.
\end{proof}

The next proposition is an instance of the fact that ``the test ideal is contained in the
multiplier ideal", which goes back to \cite[Theorem~3.4]{HY}. We give a direct proof, since
the argument is particularly transparent in our setting.

\begin{proposition}\label{estimate_order}
If $\fra$ is a nonzero ideal on $X$ and $Z$ is an irreducible proper closed subset of $X$,
then for every $\lambda\in\RR_{\geq 0}$ we have
$$\ord_Z(\tau(\fra^{\lambda}))>\lambda\cdot\ord_Z(\fra)-{\rm codim}(Z,X).$$
\end{proposition}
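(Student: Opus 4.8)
The plan is to reduce everything to a computation with the operation $\frb\mapsto\frb^{[1/p^e]}$, using the explicit description of $\tau(\fra^\lambda)$ as $(\fra^{\lceil\lambda p^e\rceil})^{[1/p^e]}$ for $e\gg 0$. Set $c=\codim(Z,X)$ and $s=\ord_Z(\fra)$, so that $\fra\cdot\cO_{X,Z}\subseteq\frm_Z^s$. Fix $e$ large enough that $\tau(\fra^\lambda)=(\fra^{\lceil\lambda p^e\rceil})^{[1/p^e]}$. Then I want a lower bound on $\ord_Z$ of the ideal $(\fra^{N})^{[1/p^e]}$ with $N=\lceil\lambda p^e\rceil$, and the key geometric input is a bound of the shape
$$
\ord_Z\!\left(\frb^{[1/p^e]}\right)\ \geq\ \frac{\ord_Z(\frb)-(p^e-1)c}{p^e}
$$
valid for any nonzero ideal $\frb$. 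Granting this, apply it to $\frb=\fra^N$, where $\ord_Z(\fra^N)=Ns\geq \lambda p^e\cdot s$, to get
$$
\ord_Z(\tau(\fra^\lambda))\ \geq\ \frac{\lambda p^e s-(p^e-1)c}{p^e}\ =\ \lambda s - c + \frac{c}{p^e}\ >\ \lambda s - c,
$$
since $c\geq 1$ ($Z$ is a proper subset, so $\codim\geq 1$) and $p^e<\infty$. This gives the strict inequality as stated (and if $\fra=0$ both sides are handled by the convention $\ord_Z(0)=\infty$; if $\lambda=0$ the claim is $\ord_Z(\cO_X)=0>-c$, which is fine).

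The heart of the matter is the estimate on $\ord_Z(\frb^{[1/p^e]})$. I would prove it by working in the local ring $\cO_{X,Z}$, or better, passing to a regular system of parameters: choose algebraic coordinates $x_1,\dots,x_n$ on an open set meeting $Z$ such that $x_1,\dots,x_c$ generate $\frm_Z$ (possible since $Z$ is irreducible and $X$ smooth, after shrinking; $Z$ is locally a complete intersection of $c$ of the coordinate hyperplanes at the generic point). Suppose $\frb\cdot\cO_{X,Z}\subseteq\frm_Z^{r}$ with $r=\ord_Z(\frb)$. One writes the trace map ${\rm Tr}^e$ in these coordinates via the formula in the text: it sends $x^{i}\,dx_1\wedge\cdots\wedge dx_n$ to (a unit times) $x^{\lfloor (i-(p^e-1)\mathbf 1)/p^e\rfloor}\,dx_1\wedge\cdots\wedge dx_n$, where $\mathbf 1=(1,\dots,1)$, with the convention that terms with a non-integral exponent vanish. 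A generator $f$ of $\frb$ lying in $\frm_Z^{r}$ expands (after base change, or using that $F_*\cO_X$ is free over the $p^e$-th powers with monomial basis) as a combination $\sum_{\mathbf j} x^{\mathbf j} g_{\mathbf j}^{p^e}$ where each monomial $x^{\mathbf j}$ that occurs has $j_1+\cdots+j_c\geq r-$ (contribution from the $g_{\mathbf j}^{p^e}$, which are $p^e$-th powers). Applying ${\rm Tr}^e$, the surviving terms have exponent $\lfloor (j_t-(p^e-1))/p^e\rfloor$ in the variable $x_t$ for $t\leq c$, and $\lfloor j_t/p^e\rfloor$ does not lose more than $(p^e-1)$ before dividing; summing over $t=1,\dots,c$ and over the $p^e$-th-power part shows the image lies in $\frm_Z^{\lceil (r-(p^e-1)c)/p^e\rceil}\supseteq\frm_Z^{(r-(p^e-1)c)/p^e}$, which is exactly the claimed bound.

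The step I expect to be the main obstacle is making the last paragraph clean: tracking the $\frm_Z$-adic order through the $p^e$-th-power decomposition $f=\sum_{\mathbf j} x^{\mathbf j} g_{\mathbf j}^{p^e}$ and then through ${\rm Tr}^e$. The subtlety is that $g_{\mathbf j}$ itself contributes to the order of $f$, and one must argue that the total $\frm_Z$-order of $x^{\mathbf j}g_{\mathbf j}^{p^e}$ being $\geq r$ forces $\ord_Z(x^{\lfloor(\mathbf j-(p^e-1)\mathbf 1)/p^e\rfloor} g_{\mathbf j})\geq (r-(p^e-1)c)/p^e$, uniformly in $\mathbf j$; the $-(p^e-1)c$ loss comes precisely from the $c$ generators of $\frm_Z$, each losing at most $p^e-1$ in the floor. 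A convenient way to organize this is to first reduce to the case where $\fra$, hence $\frb$, is $\frm_Z$-primary by localizing, or to invoke the known inclusion $\tau(\fra^\lambda)\subseteq\cJ$(analytic multiplier ideal) only as a sanity check; but since the paper wants a self-contained argument, I would carry out the coordinate computation directly, using that $c\geq 1$ at the very end to turn $\geq \lambda s-c$ into the strict $>\lambda s-c$.
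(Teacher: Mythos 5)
Your overall strategy is viable and genuinely different from the paper's: you reduce everything to a pointwise estimate $\ord_Z\bigl(\frb^{[1/p^e]}\bigr)\geq\bigl(\ord_Z(\frb)-(p^e-1)c\bigr)/p^e$ and prove it by an explicit computation with the trace map in adapted coordinates, whereas the paper shrinks $X$ so that $Z$ is smooth, blows up along $Z$, and uses the compatibility of ${\rm Tr}^e$ with $\pi_*$ together with $K_{Y/X}=(c-1)E$ and the easy formula for $[1/p^e]$ of powers of the ideal of the smooth exceptional divisor; that route gives $\ord_Z(\tau(\fra^{\lambda}))\geq\lfloor\lambda s\rfloor-c+1$, marginally sharper than your $\lambda s-c+c/p^e$, but either bound yields the strict inequality in the Proposition. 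Your key inequality is in fact true, so the plan can be completed.

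However, the step you yourself flag as ``the main obstacle'' is a genuine gap as written. From $f=\sum_{\mathbf j}g_{\mathbf j}^{p^e}x^{\mathbf j}$ and $\ord_Z(f)\geq r$ you need a lower bound on $\ord_Z\bigl(g_{\mathbf j}^{p^e}x^{\mathbf j}\bigr)$ for each $\mathbf j$, and this is a no-cancellation statement: a priori the leading forms of several low-order terms could cancel in ${\rm gr}_{\frm_Z}\cO_{X,Z}\simeq\kappa(Z)[X_1,\dots,X_c]$, so that $\ord_Z(f)$ exceeds the minimum of the orders of the terms, in which case the trace image could contain elements of small order along $Z$. Ruling this out amounts to the linear independence of the classes $\bar x_{c+1}^{j_{c+1}}\cdots\bar x_n^{j_n}X_1^{j_1}\cdots X_c^{j_c}$, $0\leq j_t<p^e$, over $p^e$-th powers, i.e.\ to $\bar x_{c+1},\dots,\bar x_n$ being a separating transcendence basis ($p$-basis) of $\kappa(Z)$ over $k$; this can be arranged, but it is an extra condition on the coordinates that your sketch neither imposes nor verifies. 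The simplest repair stays inside your framework and avoids the issue entirely: since $\frb\cdot\cO_{X,Z}\subseteq\frm_Z^r$, after shrinking you have $\frb\subseteq\cI_Z^r$ with $\cI_Z=(x_1,\dots,x_c)$, and by monotonicity of $(-)^{[1/p^e]}$ it suffices to prove $(\cI_Z^r)^{[1/p^e]}\subseteq\cI_Z^{\lceil (r-(p^e-1)c)/p^e\rceil}$. For this, take $h\,x_1^{a_1}\cdots x_c^{a_c}$ with $\sum_t a_t=r$, decompose only $h=\sum_{\mathbf j}u_{\mathbf j}^{p^e}x^{\mathbf j}$, and apply the trace formula: every surviving term is $u_{\mathbf j}$ times a monomial whose exponent in $x_t$ for $t\leq c$ is at least $(a_t-(p^e-1))/p^e$, so each term lies in the required power of $\cI_Z$ for the visible monomial reason; no bound on the orders of the coefficients $u_{\mathbf j}$, hence no cancellation argument, is needed.
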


\begin{proof}
Since construction of test ideals commutes with restriction to an open subset, after replacing
$X$ by a suitable open neighborhood of the generic point of $Z$, we may assume that $Z$
is smooth. Let $\pi\colon Y\to X$ be the blow-up of $X$ along $Z$, with exceptional divisor $E$. 
If $c={\rm codim}(Z,X)$, then the relative canonical divisor $K_{Y/X}$ is equal to $(c-1)E$. 

We have a commutative diagram
\begin{equation}\label{diag4_2}
\begin{CD}
F^e_*\pi_*(\omega_Y) @>{\pi_*({\rm Tr}_Y^e)}>>
\pi_*(\omega_Y)\\
@V{F^e_*(\rho)}VV @VV{\rho}V\\
F^e_*(\omega_X)@>{{\rm Tr}_X^e}>>\omega_X
\end{CD}
\end{equation}
in which the vertical maps are isomorphisms. Note that if $J$ is an ideal in $\cO_Y$, then
$\rho(\pi_*(J\cdot\omega_Y))=\pi_*(J\cdot\cO_Y(K_{Y/X}))\cdot\omega_X$. 

Given the nonzero ideal $\fra$ in $\cO_X$, we put $\frb=\fra\cdot\cO_Y$, and consider
$M:=F^e_*(\fra^m\cdot\omega_X)$. Since $K_{Y/X}$ is effective, we have
$F^e_*(\rho)^{-1}(M)\subseteq F^e_*\pi_*(\frb^m\cdot \omega_Y)$, and 
using 
the commutativity of the above diagram to compute ${\rm Tr}_X^e(M)$
gives
$$(\fra^m)^{[1/p^e]}\subseteq\pi_*(\cO_Y(K_{Y/X})\cdot (\frb^m)^{[1/p^e]}).$$
If $s=\ord_Z(\fra)$, then $\frb\subseteq\cO_Y(-sE)$, and since $E$ is nonsingular we have
$$\cO_Y(K_{Y/X})\cdot (\frb^m)^{[1/p^e]}\subseteq\cO_Y((c-1-\lfloor ms/p^e\rfloor) E).$$
For a fixed $\lambda\in\RR_{\geq 0}$, let us take $m=\lceil \lambda p^e\rceil$ with $e\gg 0$, 
so that $\lfloor ms/p^e\rfloor=\lfloor\lambda s\rfloor$, and we conclude
$$\ord_Z(\tau(\fra^{\lambda}))=\ord_Z((\fra^{\lceil\lambda p^e\rceil})^{[1/p^e]})\geq \lfloor \lambda s\rfloor
-c+1,$$
which is equivalent with the inequality in the proposition.
\end{proof}

In the following sections we will be interested in the case when
$X$ is projective and $D$ is a divisor on $X$ such that $h^0(X,\cO_X(mD))\neq 0$ 
for some positive integer $m$. We then
denote by $\tau(\lambda\cdot\parallel D\parallel)$ the asymptotic 
test ideal of exponent $\lambda$ associated to the graded sequence $(\fra_{|mD|})_{m\geq 1}$. 
If $D$ is a $\QQ$-divisor such that $h^0(X,\cO_X(mD))\neq 0$ for some $m$ such that $mD$
is integral, then we put $\tau(\lambda\cdot\parallel D\parallel):=
\tau(\lambda/r\cdot \parallel rD\parallel)$ for every $r$ such that $rD$ has integer coefficients.
If $\lambda=1$, then we simply write $\tau(\parallel D\parallel)$.
It is clear from definition that if $\lambda\in\QQ_{\geq 0}$, then 
$\tau(\lambda\cdot \parallel D\parallel)=\tau(\parallel \lambda D\parallel)$
(note that when $\lambda=0$, both sides are trivially equal to $\cO_X$). 
In particular, we see using Proposition~\ref{asymptotic_test} i) that if $D$ is as above 
and $\lambda_1\leq\lambda_2$ are in $\QQ_{\geq 0}$, then $\tau(\parallel \lambda_2 D\parallel)
\subseteq\tau(\parallel \lambda_1 D\parallel)$.

\section{A uniform global generation result}

In this section we prove the main technical result of the paper.  Let $X$ be a smooth projective variety over an algebraically closed field $k$ of positive characteristic. We denote by 
$K_X$ a canonical divisor (that is, we have $\cO_X(K_X)\simeq\omega_X$).
We put $n=\dim(X)$, and consider an ample divisor $H$ on $X$, such that $\cO_X(H)$ is
globally generated. 

\begin{theorem}\label{thm_main}
With the above notation, let $D$ and $E$ be divisors on $X$, and $\lambda\in\QQ_{\geq 0}$.
If $\cO_X(D)$ has non-negative Iitaka dimension, and $E-\lambda D$ is nef, then
the sheaf
$$\tau(\lambda\cdot\parallel D\parallel)\otimes_{\cO_X} \cO_X(K_X+E+dH)$$
is globally generated for every $d\geq n+1$.
\end{theorem}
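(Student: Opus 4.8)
The plan is to reduce the global generation statement to a Castelnuovo–Mumford regularity computation, following the strategy of Keeler, Hara, and Schwede, with the modifications forced by working with $\tau(\lambda\cdot\parallel D\parallel)$ rather than a multiplier ideal and by the presence of the nef twist $E-\lambda D$. First I would fix a positive integer $r$ such that $r\lambda\in\ZZ$ and $\tau(\lambda\cdot\parallel D\parallel)=\tau\bigl(\fra_{|rmD|}^{\lambda/rm}\bigr)$ for $m$ divisible enough, and then pick $e\gg 0$ so that, writing $\fra=\fra_{|\ell D|}$ for a suitable large multiple $\ell$, one has $\tau(\lambda\cdot\parallel D\parallel)=\bigl(\fra^{\lceil (\lambda/\ell) p^e\rceil}\bigr)^{[1/p^e]}$. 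The point is that $\bigl(\fra^{N}\bigr)^{[1/p^e]}\otimes\omega_X$ is the image under $\mathrm{Tr}^e\colon F^e_*(\omega_X)\to\omega_X$ of the subsheaf $\fra^N\cdot\omega_X\subseteq\omega_X$ pushed forward by $F^e$; since $\mathrm{Tr}^e$ is surjective and $F^e_*$ is exact and affine, to globally generate $\tau(\lambda\cdot\parallel D\parallel)\otimes\cO_X(K_X+E+dH)$ it suffices to globally generate $F^e_*\bigl(\fra^N\cdot\omega_X\otimes F^{e*}\cO_X(E+dH)\bigr)$, i.e. (using $F^{e*}\cO_X(H)\cong\cO_X(p^eH)$ and adjusting) it suffices to globally generate $\fra^N\otimes\cO_X(K_X)\otimes L$ for an appropriate line bundle $L$ that, crucially, grows like $p^e$ times an ample class plus a \emph{nef} correction coming from $E-\lambda D$.

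Next I would invoke Mumford regularity: $\fra^N\otimes\cO_X(K_X)\otimes L$ is globally generated once it is $0$-regular with respect to $\cO_X(H)$, which holds provided $H^i\bigl(X,\fra^N\otimes\cO_X(K_X)\otimes L\otimes\cO_X(-iH)\bigr)=0$ for all $i>0$. The key observation is that, because $\fra=\fra_{|\ell D|}$ is the base-ideal of a linear system, there is a surjection $H^0(\cO_X(\ell D))\otimes\cO_X\twoheadrightarrow\fra\cdot\cO_X(\ell D)$, hence $\fra^N$ is a quotient of a sum of copies of $\cO_X(-N\ell D)$ twisted suitably; more precisely $\fra^N\otimes\cO_X(N\ell D)$ is globally generated, so $\fra^N$ sits in an exact complex built from such bundles. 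Here I must be careful — $\fra^N$ need not be locally principal — so rather than a single resolution step I would control the relevant cohomology by choosing $e$ large and writing the twisting line bundle in the form $p^eD' + (\text{nef}) + (\text{ample bounded independently of }e)$, where $D'$ is chosen using $E-\lambda D$ nef so that $N\ell\cdot D$ is absorbed; then $\fra^N\cdot(\text{that bundle})$ is a quotient of copies of a bundle of the shape $\cO_X(p^e(\text{ample}) + (\text{nef}) + (\text{bounded ample}))$.

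At that point I would apply \textbf{Fujita's vanishing theorem} in place of Serre vanishing: given an ample divisor and a bounded-degree coherent sheaf, there is $m_0$ such that $H^i(X,\mathcal F\otimes\cO_X(mA)\otimes N)=0$ for all $i>0$, all $m\ge m_0$, and all nef $N$ — this is exactly what handles the nef divisor $E-\lambda D$ uniformly, and it is why Fujita rather than Serre is needed. Feeding the $p^e$-scaled ample class (with $e$ chosen after, and independently of, the bounded data) into Fujita kills the higher cohomology, giving the regularity and hence global generation after applying $\mathrm{Tr}^e$ and pushing back down. The main obstacle, and the place requiring genuine care, is the bookkeeping in the previous paragraph: arranging the line bundle as ($p^e\cdot$ample) $+$ nef $+$ (bounded ample) simultaneously for the $d\ge n+1$ copies of $H$ needed by Mumford regularity, while keeping the ``bounded'' part genuinely independent of $e$, and ensuring the non-principality of $\fra^N$ does not obstruct passing from global generation of $\fra^N\otimes(\cdots)$ to the vanishing of the regularity cohomology groups — this is where the hypothesis $d\ge n+1=\dim X+1$ enters, via the $n$ vanishings $H^i(\,\cdot\,\otimes\cO_X(-iH))=0$ for $1\le i\le n$.
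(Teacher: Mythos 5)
Your overall strategy --- trace map, Frobenius pushforward, Castelnuovo--Mumford regularity with respect to $\cO_X(H)$, and Fujita's vanishing theorem to absorb the nef divisor $E-\lambda D$ together with correction terms bounded independently of $e$ --- is the same as the paper's. But there is a genuine gap at the central reduction. You assert that to globally generate $F^e_*\bigl(\fra^N\cdot\omega_X\otimes F^{e*}\cO_X(E+dH)\bigr)$ it suffices to globally generate the sheaf upstairs, $\fra^N\otimes\cO_X(K_X)\otimes L$ with $L=\cO_X(p^e(E+dH))$, and you then check $0$-regularity of this upstairs sheaf, i.e.\ vanishing of $H^i$ after twisting by $-iH$. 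This reduction fails: global generation is not preserved by $F^e_*$, since the $\cO_X$-module structure on $F^e_*\cG$ acts through $p^e$-th powers (for instance $F_*\cO_{\PP^1}\simeq\cO\oplus\cO(-1)^{\oplus(p-1)}$ is not globally generated although $\cO_{\PP^1}$ is), and the trace map is only linear for that twisted structure, so global generation upstairs cannot be transported to the target. What is actually needed is $0$-regularity of the pushforward $F^e_*\cO_X(K_X+p^e(E+dH)-N\ell D)$ itself; by the projection formula and the affineness of $F^e$, its defining vanishings are $H^i\bigl(X,\cO_X(K_X+p^e(E+(d-i)H)-N\ell D)\bigr)=0$ for $1\leq i\leq n$, i.e.\ the twist removed upstairs is $ip^eH$, not $iH$. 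This is precisely where $d\geq n+1$ enters (it keeps $(d-i)H$ ample for all $i\leq n$); in your bookkeeping, where only $iH$ is subtracted, the hypothesis $d\geq n+1$ would play no role, which is a symptom of the regularity computation being applied to the wrong sheaf.

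A second, related problem is your treatment of the non-principal ideal $\fra^N$: you propose to control $H^i(X,\fra^N\otimes(\cdots))$ by resolving $\fra^N$ through copies of $\cO_X(-N\ell D)$, but a single surjection $W\otimes\cO_X(-N\ell D)\twoheadrightarrow\fra^N$ does not transfer cohomology vanishing to the quotient (one would have to control the kernel and all higher syzygies). The paper's order of operations avoids this entirely: the surjection onto $\fra^N$ is used \emph{before} any cohomology is taken, at the level of global generation --- which does pass through surjections --- so that the $0$-regularity computation only involves the Frobenius pushforward of a line bundle. Fujita's vanishing then applies after writing $p^eE-N\ell D$ as a nef multiple of $bE-aD$ (where $\lambda=a/b$) plus one of finitely many divisors $T_j$ depending only on $p^e$ modulo $\ell b$. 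With these two corrections your outline becomes the paper's argument.
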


The proof that we give below follows  the proof of 
\cite[Theorem~4.3]{Schwede}, which in turn makes use of an argument of
 Keeler \cite{Keeler} and Hara (unpublished).
In our proof we also use of the following theorem of Fujita \cite{Fujita}.
If $\cF$ is a coherent sheaf on $X$ and $A$ is an ample divisor, then there is $\ell_0$
such that $H^i(X,\cF\otimes_{\cO_X}\cO_X(\ell A+P))=0$ for every $i\geq 1$, every $\ell\geq\ell_0$, and every
nef divisor $P$. 

\begin{proof}[Proof of Theorem~\ref{thm_main}]
By definition of the asymptotic test ideal, we can find $m$ such that if 
$\fra_m$ is the ideal defining the base-locus of $|mD|$,  then $\tau(\lambda\cdot\parallel D\parallel)=
\tau(\fra_m^{\lambda/m})$. Let us fix such $m$. 
For $e\gg 0$ we have $\tau(\fra_m^{\lambda/m})=(\fra_m^{\lceil \lambda p^e/m\rceil})^{[1/p^e]}$, and therefore
there is a surjective map
\begin{equation}\label{thm_main_eq1}
F^e_*(\fra_m^{\lceil \lambda p^e/m\rceil}\otimes_{\cO_X}\cO_X(K_X))\to
\tau(\lambda\cdot\parallel D\parallel)\otimes_{\cO_X}\cO_X(K_X).
\end{equation}
By tensoring with $\cO_X(E+dH)$ and using the projection formula, we obtain the surjective map
\begin{equation}\label{thm_main_eq2}
F^e_*(\fra_m^{\lceil \lambda p^e/m\rceil}\otimes_{\cO_X}\cO_X(K_X+p^e(E+dH)))
\to\tau(\lambda\cdot\parallel D\parallel)\otimes_{\cO_X}\cO_X(K_X+E+dH).
\end{equation}
On the other hand, we have by definition a surjective map 
$H^0(X,\cO_X(mD))\otimes_k\cO_X(-mD)\to\fra_m$,
hence a surjective map 
\begin{equation}\label{eq10_thm_main}
W\otimes_k\cO_X(-m\lceil\lambda p^e/m\rceil D)\to
\fra_m^{\lceil\lambda p^e/m\rceil},
\end{equation}
 where $W=
{\rm Sym}^{\lceil \lambda p^e/m\rceil}H^0(X,\cO_X(mD))$.
Tensoring (\ref{eq10_thm_main}) with $\cO_X(K_X+p^e(E+dH))$ and
pushing forward by $F^e$ (note that $F^e_*$ is exact since the Frobenius morphism is affine),
we obtain a surjective map
\begin{equation}\label{thm_main_eq3}
W\otimes_k F^e_*\cO_X(K_X+p^e(E+dH)-m\lceil\lambda p^e/m\rceil D)\to
F^e_*(\fra_m^{\lceil \lambda p^e/m\rceil}\otimes_{\cO_X}\cO_X(K_X+p^e(E+dH))).
\end{equation}
It follows from the surjective maps (\ref{thm_main_eq2}) and (\ref{thm_main_eq3})
that in order to complete the proof of the theorem, it is enough to show that for $e\gg 0$,
the sheaf 
$$F^e_*\cO_X(K_X+p^e(E+dH)-m\lceil\lambda p^e/m\rceil D)$$
is globally generated. In fact, it is enough to show that this sheaf is $0$-regular in the sense
of Castelnuovo-Mumford regularity with respect to the ample globally generated line bundle
$\cO_X(H)$ (we refer to \cite[\S 1.8]{positivity} for basic facts about Castelnuovo-Mumford regularity).
Therefore, it is enough to show that if $e\gg 0$, then
\begin{equation}\label{thm_main_eq4}
H^i(X, \cO_X(-iH)\otimes_{\cO_X}F^e_*\cO_X(K_X+p^e(E+dH)-m\lceil\lambda p^e/m\rceil D))=0
\end{equation}
for all $i$ with $1\leq i\leq n$. 

Using again the projection formula and the fact that $F^e$ is affine, we obtain
$$H^i(X, \cO_X(-iH)\otimes_{\cO_X}F^e_*\cO_X(K_X+p^e(E+dH)-
m\lceil\lambda p^e/m\rceil D))$$
$$\simeq H^i(X,F^e_*\cO_X(K_X+p^e(E+(d-i)H)-m\lceil\lambda p^e/m\rceil D))$$
$$\simeq H^i(X,\cO_X(K_X+p^e(E+(d-i)H)-m\lceil\lambda p^e/m\rceil D)).$$
Note that by assumption $d-i\geq 1$ for $i\leq n$. 

\noindent{\bf Claim}. We can find finitely many divisors $T_1,\ldots,T_r$ on $X$ that satisfy
the following property:
for every $e$, there is $j$ such that the difference $p^eE-m\lceil \lambda p^e/m\rceil D-T_j$ is
 nef.
If this is the case, then by applying Fujita's vanishing theorem to each of the sheaves
$\cF_j=\cO_X(K_X+T_j)$ and to the ample divisor $(d-i)H$ we obtain
$$H^i(X,\cO_X(K_X+p^e(E+(d-i)H)-m\lceil\lambda p^e/m\rceil D))=0$$
for all $i$ with $1\leq i\leq n$ and all $e\gg 0$. Therefore in order to finish the proof, it is enough to check the assertion in the claim.

Let us write $\lambda=\frac{a}{b}$, for non-negative integers $a$ and $b$, with $b$ nonzero. 
For every $e\geq 1$, we write $p^e=mbs+t$, for non-negative integers $s$ and $t$, with
$t<mb$. In this case $\lceil \lambda p^e/m\rceil=as+\lceil \frac{at}{bm}\rceil$, hence
$$p^eE-m\lceil\lambda p^e/m\rceil D=ms(bE-aD)+\left(tE-m\left\lceil\frac{at}{bm}\right\rceil 
D\right),$$
and the claim follows since $bE-aD$ is nef by assumption, and $t$ can only take finitely many values. This completes the proof of the theorem.
\end{proof}

\begin{remark}
In Theorem~\ref{thm_main}, we may allow $D$ to be a $\QQ$-divisor:
in this case  we may simply replace $D$ by $mD$
and $\lambda$ by $\lambda/m$, with $m$ divisible enough.
\end{remark}

\section{Applications to asymptotic test ideals of divisors}

In this section we give some consequences of Theorem~\ref{thm_main} to general properties
of asymptotic test ideals. From now on, we always assume that $X$ is a smooth projective variety
over an algebraically closed field $k$ of characteristic $p>0$. Our first result says that the asymptotic test ideals of a big $\QQ$-divisor only depend on the numerical equivalence class
of the divisor.

\begin{proposition}\label{num_equivalence}
If $D$ and $E$ are numerically equivalent big $\QQ$-divisors on $X$, then 
$$\tau(\lambda\cdot\parallel D\parallel)=\tau(\lambda\cdot\parallel E\parallel)$$ for every
$\lambda\in\RR_{\geq 0}$.
\end{proposition}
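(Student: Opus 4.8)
The statement says that the asymptotic test ideals $\tau(\lambda\cdot\parallel D\parallel)$ of a big $\QQ$-divisor depend only on the numerical class of $D$. The natural strategy is to reduce to the integral case (replacing $D$ and $E$ by common multiples, using that $\tau(\lambda\cdot\parallel D\parallel)=\tau(\parallel\lambda D\parallel)$ for rational $\lambda$), and then to compare $\tau(\lambda\cdot\parallel D\parallel)$ with $\tau(\lambda\cdot\parallel E\parallel)$ when $D$ and $E$ are numerically equivalent integral divisors. First I would fix an ample globally generated divisor $H$ as in Section~4. Since $D$ is big and $D\equiv E$, the divisor $E$ is also big; moreover for a suitable positive integer $\ell$ the divisor $D - E + \ell H$ (equivalently $\ell H$ itself, since $D\equiv E$) is ample, and in fact one can choose things so that $E + G$ is effective-ish in the strong sense needed below for $G = K_X + \ell H$ with $\ell$ large.

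\textbf{Key steps.} The heart of the argument is an inclusion in one direction, say $\tau(\lambda\cdot\parallel D\parallel)\subseteq\tau(\lambda\cdot\parallel E\parallel)$, after which symmetry (swapping $D$ and $E$) finishes the proof. To get this inclusion I would invoke Theorem~\ref{thm_main}: since $E\equiv D$, the divisor $(E') - \lambda' D'$ is numerically trivial, hence nef, for appropriate integral rescalings, so the theorem gives global generation of $\tau(\lambda\cdot\parallel D\parallel)\otimes\cO_X(K_X + E_0 + dH)$ for $d\geq n+1$, where $E_0$ is chosen numerically equivalent to an integral multiple of $\lambda D$. Global generation produces, for each fixed twist, a surjection from a trivial bundle, hence enough sections of $\tau(\lambda\cdot\parallel D\parallel)\otimes\cO_X(\text{(multiple of }E) + G)$ to conclude that this ideal sheaf, twisted appropriately, sits inside the base-ideal-type sheaf governing $\parallel E\parallel$. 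Concretely: global generation of $\tau(\lambda\cdot\parallel mD\parallel)\otimes\cO_X(K_X + E' + dH)$ shows that $\tau(\lambda\cdot\parallel mD\parallel)$ twisted by a fixed ample-enough divisor $G$ is contained in the ideal $\fra_{|qE|}$ of the base locus of some $|qE|$ (one multiplies chosen sections), and then feeds into Proposition~\ref{asymptotic_test}(iv) with an auxiliary ideal $\frc$ coming from a fixed nonzero section — exactly the mechanism by which "global generation of $\mathcal I\otimes\cO(G)$" translates into "$\mathcal I$ is contained (up to a fixed multiplier) in a base ideal of the other linear system". Running the asymptotic construction then yields $\tau(\lambda\cdot\parallel D\parallel)\subseteq\tau(\lambda\cdot\parallel E\parallel)$.

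\textbf{Main obstacle.} The delicate point is bookkeeping the twists so that the auxiliary ideal $\frc$ in Proposition~\ref{asymptotic_test}(iv) is honestly \emph{independent of $m$}: one needs a single fixed nonzero section (equivalently a fixed effective divisor in a fixed numerical class, using bigness of $D$ to produce sections of $\cO_X(m_0 D - E'')$ or similar) whose associated ideal absorbs the discrepancy between $\fra_{|mD|}$ twisted down and $\fra_{|m'E|}$ for all large $m$ simultaneously. This is where bigness is essential — it guarantees that a fixed ample twist is eventually "swallowed" by the linear systems $|mE|$ uniformly, so that $\frc\cdot\fra_{|mD|}\subseteq\fra_{|mE|}$ (or the analogous statement with bounded shifts) holds for all $m\gg 0$. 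Once that uniformity is in hand, Proposition~\ref{asymptotic_test}(iv) applies verbatim, and the final statement follows by symmetry together with the observation, already recorded in the excerpt, that $\tau(\lambda\cdot\parallel D\parallel)$ for real $\lambda$ is determined by its values at rational exponents via the left-continuity property of Proposition~\ref{asymptotic_test}(iii).
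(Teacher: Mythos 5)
Your proposal is correct and follows essentially the same route as the paper: reduce to integral divisors, apply Theorem~\ref{thm_main} with the nef (indeed numerically trivial) difference $m(E-D)$ and a fixed effective divisor $G\sim \ell D-(K_X+(n+1)H)$ supplied by bigness, deduce $\cO_X(-G)\cdot\fra_{|mD|}\subseteq\fra_{|mE|}$ for $m\geq\ell$, and conclude via Proposition~\ref{asymptotic_test}(iv) and symmetry. The only superfluous step is your appeal to rational approximation of $\lambda$ via Proposition~\ref{asymptotic_test}(iii): Proposition~\ref{asymptotic_test}(iv) already applies to all real $\lambda\geq 0$ once the uniform inclusion with the fixed ideal $\frc=\cO_X(-G)$ is in place.
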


\begin{proof}
The proof follows as in the case of multiplier ideals in characteristic zero,
see \cite[Example~11.3.12]{positivity}. After replacing $D$ and $E$ by multiples, we may clearly assume that both $D$ and $E$ have integer coefficients. 
Let $H$ be a very ample divisor and $n=\dim(X)$. 
Since $D$ is big, there is a positive integer $\ell$ such that
$\ell D-(K_X+(n+1)H)$ is linearly equivalent with an effective divisor $G$. It follows from
Theorem~\ref{thm_main} that $\tau(\parallel (m-\ell)D\parallel)\otimes_{\cO_X}
\cO_X(mE-G)$ is globally generated for every $m\geq\ell$, hence 
$\tau(\parallel(m-\ell)D\parallel)$
is contained in the ideal $\fra_{|mE-G|}$. Therefore
$$\cO_X(-G)\cdot\tau(\parallel (m-\ell)D\parallel)
\subseteq\fra_{|G|}\cdot\fra_{|mE-G|}
\subseteq\fra_{|mE|}$$
and we deduce
$$\cO_X(-G)\cdot\fra_{|mD|}\subseteq \cO_X(-G)\cdot \tau(\parallel mD\parallel)
\subseteq  \cO_X(-G)\cdot \tau(\parallel (m-\ell)D\parallel)\subseteq \fra_{|mE|}$$
for every $m\geq \ell$. 
Proposition~\ref{asymptotic_test} iv) implies 
$\tau(\lambda\cdot\parallel D\parallel)\subseteq\tau(\lambda
\cdot\parallel E\parallel)$ for every $\lambda\in\RR_{\geq 0}$, and the reverse inclusion
follows by symmetry.
\end{proof}

\begin{proposition}\label{jumps}
If $D$ is a divisor on $X$ such that $\cO_X(D)$ has non-negative Iitaka dimension, then
the set of $F$-jumping numbers of the graded sequence of ideals $(\fra_{|mD|})_{m\geq 1}$
is discrete.
\end{proposition}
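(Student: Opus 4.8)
The plan is to show that the $F$-jumping numbers of $(\fra_{|mD|})_{m\geq1}$ have no accumulation point from below, which (together with the fact that the numbers lie in a discrete subset of $\QQ$, to be established along the way) gives discreteness. First I would reduce to a bounded interval: for a fixed $\lambda_0>0$ it suffices to show that the jumping numbers in $[0,\lambda_0]$ form a finite set. The key auxiliary object is the non-asymptotic test ideal of a single $\fra_{|mD|}$: by definition $\tau(\lambda\cdot\parallel D\parallel)=\tau(\fra_{|mD|}^{\lambda/m})$ for $m$ divisible enough (depending a priori on $\lambda$), and the jumping numbers of a fixed ideal $\fra_{|mD|}$ are known to be discrete rational numbers by \cite{BMS}. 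So the whole issue is the dependence of the "divisible enough" $m$ on $\lambda$: one needs a single $m$ that works uniformly for all $\lambda$ in the given range.

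The mechanism to get such uniformity is Theorem~\ref{thm_main}. I would argue as follows. Fix a very ample $H$ with $n=\dim X$, and (using that after replacing $D$ by a multiple we may assume $\cO_X(D)$ has sections) choose $\ell$ so that $\ell D - (K_X+(n+1)H)$ is linearly equivalent to an effective divisor $G$, exactly as in the proof of Proposition~\ref{num_equivalence}. Applying Theorem~\ref{thm_main} with $E=mD$ and exponent $\lambda$ — here we need $mD-\lambda\cdot(\text{the divisor whose sequence we test})$ to be nef, which holds once we feed in the right multiple — yields that for $m\gg_\ell 0$ the sheaf $\tau(\lambda\cdot\parallel(m-\ell)D\parallel)\otimes\cO_X(mD-G)$ is globally generated, hence $\cO_X(-G)\cdot\tau(\lambda\cdot\parallel(m-\ell)D\parallel)\subseteq\fra_{|mD|}$. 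Combined with $\fra_{|mD|}\subseteq\tau(\parallel mD\parallel)\subseteq\tau(\lambda\cdot\parallel mD\parallel)$ this sandwiches the asymptotic test ideal between the single ideal $\fra_{|mD|}$ (twisted by $\cO_X(\pm G)$) and its own shift, for all $m$ large, with the bound uniform in $\lambda$.

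From the sandwich I would extract the comparison: there is a fixed $q$ (a power of $p$ or an integer, coming from $G$ and the uniform $m$) so that the F-jumping numbers of $(\fra_{|mD|})_{m\geq1}$ in $[0,\lambda_0]$ are controlled by — contained in a fixed rescaling of — the F-jumping numbers of the single ideal $\fra_{|m_0D|}$ for one well-chosen $m_0$. Since the latter set is finite in any bounded interval, so is the former. Concretely: if $\mu<\lambda$ are both in $[0,\lambda_0]$ and $\tau(\mu\cdot\parallel D\parallel)=\tau(\fra_{|m_0D|}^{\mu/m_0})$ and $\tau(\lambda\cdot\parallel D\parallel)=\tau(\fra_{|m_0D|}^{\lambda/m_0})$ for the \emph{same} $m_0$, then $\lambda$ is a jumping number of the asymptotic sequence only if $\lambda/m_0$ is a jumping number of $\fra_{|m_0D|}$; the point of the previous paragraph is exactly to produce such a uniform $m_0$.

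The main obstacle, as this outline makes clear, is the uniformity of the "divisible enough" $m$ across the parameter $\lambda$: a priori Proposition~\ref{asymptotic_test}(iii) and the definition only give, for each $\lambda$, some $m(\lambda)$, and without a bound one cannot transfer discreteness from a single $\fra_{|mD|}$. Making the global generation estimate of Theorem~\ref{thm_main} do this work — i.e.\ converting "globally generated twist" into a two-sided ideal containment with constants independent of $\lambda$, and then reading off that only finitely many $F$-jumping numbers can occur below $\lambda_0$ — is the crux; the remaining steps (rationality, passing from "bounded interval" to "discrete") are routine given \cite{BMS}.
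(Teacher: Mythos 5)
Your proposal has genuine gaps, both in the setup and in the decisive step. First, Proposition~\ref{jumps} only assumes that $\cO_X(D)$ has non-negative Iitaka dimension, not that $D$ is big; so the divisor $G$ with $\ell D-(K_X+(n+1)H)$ linearly equivalent to an effective divisor, which you borrow from the proof of Proposition~\ref{num_equivalence}, need not exist (having sections of some multiple is far weaker than bigness). Second, even for $D$ big, your application of Theorem~\ref{thm_main} with twist $E=mD$ does not satisfy its hypothesis: writing $mD-G\sim K_X+E'+(n+1)H$ with $E'\sim(m-\ell)D$, the required nef difference comes out to a positive multiple of $D$ itself (namely $(m-\ell-\lambda)D$, or $(1-\lambda)(m-\ell)D$ depending on how you normalize the exponent), and no positive multiple of a non-nef $D$ is nef, so ``feeding in the right multiple'' cannot repair this. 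Third, even granting the containment $\cO_X(-G)\cdot\tau(\lambda\cdot\parallel(m-\ell)D\parallel)\subseteq\fra_{|mD|}$, the step you yourself call the crux --- producing a single $m_0$ with $\tau(\lambda\cdot\parallel D\parallel)=\tau(\fra_{|m_0D|}^{\lambda/m_0})$ simultaneously for all $\lambda\leq\lambda_0$, so that the jumps inject into the discrete set of jumps of the single ideal $\fra_{|m_0D|}$ --- is not established: a two-sided containment with the fixed error ideal $\cO_X(-G)$ is strictly weaker than such an equality, and no mechanism is offered to remove the error.

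The paper's proof avoids any uniform-$m_0$ statement. One reduces to rational $\lambda<\lambda_0$ by Proposition~\ref{asymptotic_test}~iii), fixes a divisor $A$ with both $A$ and $A-\lambda_0D$ nef (e.g.\ a large multiple of an ample divisor), so that $A-\lambda D$ is nef for every $\lambda\in[0,\lambda_0]$, and applies Theorem~\ref{thm_main} with this fixed twist: each sheaf $\tau(\lambda\cdot\parallel D\parallel)\otimes_{\cO_X}\cO_X(K_X+A+(n+1)H)$ is globally generated, hence determined by its space of global sections $V_\lambda$ inside the fixed finite-dimensional space $V=H^0(X,\cO_X(K_X+A+(n+1)H))$. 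Since the $V_\lambda$ form a nested chain as $\lambda$ varies, $\dim V$ bounds the number of distinct ideals $\tau(\lambda\cdot\parallel D\parallel)$ with $\lambda<\lambda_0$, hence the number of $F$-jumping numbers below $\lambda_0$, which gives discreteness directly. If you want to keep your transfer-to-a-single-ideal strategy, you would need to replace the bigness-based choice of $G$ and the twist $E=mD$ by a correct auxiliary nef twist as above, and then still supply a new argument upgrading the containments to the uniform equality; the finite-dimensionality argument is the simpler route.
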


\begin{proof}
It is enough to show that for every $\lambda_0>0$, there are only finitely many different values
for $\tau(\lambda\cdot\parallel D\parallel)$, with $\lambda<\lambda_0$. Furthermore, it 
follows from Proposition~\ref{asymptotic_test} iii) that it is
 enough to only consider $\lambda\in\QQ_{\geq 0}$.

Let $H$ be a very ample divisor on $X$ and let $\dim(X)=n$. We also fix a divisor $A$ such that
both $A$ and $A-\lambda_0D$ are nef (for example, $A$ could be a large multiple of an ample divisor).
In this case $A-\lambda D$ is nef for every $\lambda$ with $0\leq \lambda\leq\lambda_0$. 
For every such $\lambda$ which is rational, Theorem~\ref{thm_main} implies that
$$\tau(\lambda\cdot \parallel D\parallel)\otimes_{\cO_X}\cO_X(K_X+A+(n+1)H)$$
is globally generated. In particular, its space of global sections $V_{\lambda}$, which is a linear subspace of
$V:=H^0(X,\cO_X(K_X+A+(n+1)H))$, determines $\tau(\lambda \cdot \parallel D\parallel)$.
Furthermore, if $\lambda_r<\ldots<\lambda_1<\lambda_0$, then 
$V_{\lambda_1}\subseteq\ldots\subseteq V_{\lambda_r}$. Since $V$ is finite-dimensional, this clearly bounds the number of distinct values for $\tau(\lambda\cdot \parallel D\parallel)$ with
$\lambda<\lambda_0$.
\end{proof}

In characteristic zero, Hacon used global generation results to attach a type of 
asymptotic multiplier ideal to a pseudo-effective divisor. The analogous construction
works also in positive characteristic, as follows. Suppose that $D$ is a pseudo-effective $\RR$-divisor
on $X$. 
For every ample $\RR$-divisor $A$, the sum $D+A$ is big. In particular, if $D+A$ is a $\QQ$-divisor, then we may consider
$\tau(\lambda\cdot \parallel D+A\parallel)$ for every $\lambda\in\RR_{\geq 0}$.

\begin{proposition}\label{Hacon_test_ideal}
For every pseudo-effective $\RR$-divisor $D$ and all $\lambda\in\RR_{\geq 0}$, there is a unique minimal element, that we denote by
$\tau_+(\lambda\cdot\parallel D\parallel)$, among all ideals of the form 
$\tau(\lambda\cdot \parallel D+A\parallel)$, where $A$ varies over the ample $\RR$-divisors
such that $D+A$ is a $\QQ$-divisor. Furthermore, there is an open neighborhood 
${\mathcal U}$
of the origin in $\NS^1(X)_{\RR}$ such that
$$\tau_+(\lambda\cdot\parallel D\parallel)=\tau(\lambda\cdot \parallel D+A\parallel)$$
for every ample divisor $A$ with $D+A$ a $\QQ$-divisor and such that the class of $A$ lies in 
${\mathcal U}$.
\end{proposition}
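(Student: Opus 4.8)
The plan is to establish a Noetherianity/monotonicity statement for the family of ideals $\tau(\lambda\cdot\parallel D+A\parallel)$ as $A$ shrinks toward zero, exactly as in the characteristic-zero construction of Hacon's ideal. First I would record the key monotonicity: if $A$ and $A'$ are ample $\RR$-divisors with $D+A$, $D+A'$ both $\QQ$-divisors and $A-A'$ ample (equivalently, $A'$ is ``smaller'' than $A$ in the sense that $A-A'$ is ample), then $D+A = (D+A') + (A-A')$, and I claim $\tau(\lambda\cdot\parallel D+A\parallel)\subseteq\tau(\lambda\cdot\parallel D+A'\parallel)$. To see this, pick a positive integer $r$ with $r(D+A)$ and $r(D+A')$ integral; then $\fra_{|mr(D+A')|}\cdot\fra_{|mr(A-A')|}\subseteq\fra_{|mr(D+A)|}$ for all $m$ (as in the Remark following the definition of the asymptotic order), and since $A-A'$ is big (being ample) some $\fra_{|mr(A-A')|}$ is nonzero, so Proposition~\ref{asymptotic_test}~iv) applied with $\frc$ that nonzero ideal gives the desired inclusion.

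Next I would use this monotonicity to produce the minimal element. Consider the family $\mathcal{S}$ of all ideals $\tau(\lambda\cdot\parallel D+A\parallel)$ with $A$ ample and $D+A$ a $\QQ$-divisor. By the Noetherian property of $\cO_X$, $\mathcal{S}$ has a minimal element $\tau(\lambda\cdot\parallel D+A_0\parallel)$ for some fixed such $A_0$. I claim this ideal is the \emph{unique} minimal element, hence in particular is contained in every member of $\mathcal{S}$. Indeed, given any ample $\RR$-divisor $A$ with $D+A$ a $\QQ$-divisor, choose an ample $\RR$-divisor $A'$ with $D+A'$ a $\QQ$-divisor and with both $A-A'$ and $A_0-A'$ ample (such an $A'$ exists: take $A'$ to be a small enough $\QQ$-ample perturbation of $D$, or more simply, since the ample cone is open and convex and classes with $D+(\cdot)$ rational are dense, one can pick $A'$ in the interior of the intersection of the shifted cones $A-\mathrm{Amp}$ and $A_0-\mathrm{Amp}$). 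By the monotonicity step, $\tau(\lambda\cdot\parallel D+A'\parallel)$ is contained in both $\tau(\lambda\cdot\parallel D+A\parallel)$ and $\tau(\lambda\cdot\parallel D+A_0\parallel)$; by minimality of the latter, $\tau(\lambda\cdot\parallel D+A_0\parallel)=\tau(\lambda\cdot\parallel D+A'\parallel)\subseteq\tau(\lambda\cdot\parallel D+A\parallel)$. This proves $\tau(\lambda\cdot\parallel D+A_0\parallel)$ is the unique minimal element; denote it $\tau_+(\lambda\cdot\parallel D\parallel)$.

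For the last assertion, I want an open neighborhood $\mathcal{U}$ of $0$ in $\NS^1(X)_{\RR}$ such that $\tau(\lambda\cdot\parallel D+A\parallel)=\tau_+(\lambda\cdot\parallel D\parallel)$ whenever $A$ is ample, $D+A$ is a $\QQ$-divisor, and the class of $A$ lies in $\mathcal{U}$. Take $\mathcal{U}$ to be the open set of classes $\alpha$ such that $A_0-\alpha$ (where $A_0$ is as above, realizing the minimum) is ample; this is open since the ample cone is open, and it contains $0$. For $A$ with class in $\mathcal{U}$, the divisor $A_0-A$ is ample, so by the monotonicity step $\tau(\lambda\cdot\parallel D+A_0\parallel)\subseteq\tau(\lambda\cdot\parallel D+A\parallel)$; but $\tau(\lambda\cdot\parallel D+A\parallel)\in\mathcal{S}$ and $\tau(\lambda\cdot\parallel D+A_0\parallel)$ is the minimal element of $\mathcal{S}$, forcing equality.

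The main obstacle I anticipate is the monotonicity step: one must be careful that Proposition~\ref{asymptotic_test}~iv) is being applied to genuine graded sequences with a common denominator, and that shrinking $A$ to an $\RR$-divisor $A'$ with $D+A'$ rational \emph{and} both $A-A'$, $A_0-A'$ ample can actually be arranged — this is where one uses that the ample cone is open, that $\QQ$-divisor classes are dense, and that one only needs $A'$ in a small nonempty open set, so no real tension arises. The rest is a routine Noetherian-minimality argument, parallel to \cite[\S11.1]{positivity}.
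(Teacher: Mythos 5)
The central step of your argument fails: you assert that ``by the Noetherian property of $\cO_X$, $\mathcal{S}$ has a minimal element.'' Noetherianity is the ascending chain condition; it guarantees \emph{maximal} elements of a nonempty family of ideals, not minimal ones, and a decreasing family of ideal sheaves need not contain a smallest member (think of $(x)\supset (x^2)\supset\cdots$). The existence of a minimal element among the ideals $\tau(\lambda\cdot\parallel D+A\parallel)$ is precisely the nontrivial content of the proposition, and it is here that the paper (following Hacon's idea) invokes the uniform global generation result, Theorem~\ref{thm_main}: fixing a very ample $H$ and an ample $B$ with $B-\lambda D$ ample, for every admissible $A$ with $B-\lambda(D+A)$ ample the sheaf $\tau(\lambda\cdot\parallel D+A\parallel)\otimes_{\cO_X}\cO_X(K_X+(n+1)H+B)$ is globally generated, so each such ideal is recovered from its space of sections $W_A$ inside the \emph{fixed} finite-dimensional space $W=H^0(X,\cO_X(K_X+(n+1)H+B))$. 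Finite-dimensionality of $W$ then yields an $A$ with $W_A$ minimal, and the directedness coming from the monotonicity step upgrades this to the unique minimal element (and gives the neighborhood $\mathcal{U}$ exactly as in your last paragraph). Your proposal never uses Theorem~\ref{thm_main}, so it is missing the key ingredient; without it, nothing rules out the family of ideals strictly decreasing as $A$ shrinks.

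Two secondary points. First, your monotonicity claim is stated backwards: if $A-A'$ is ample, then enlarging the divisor enlarges the base ideals, $\fra_{|m(D+A')|}\subseteq\fra_{|m(D+A)|}$ for $m$ sufficiently divisible, hence $\tau(\lambda\cdot\parallel D+A'\parallel)\subseteq\tau(\lambda\cdot\parallel D+A\parallel)$ — not the inclusion you announce (your own displayed inclusion of base ideals, and your use of the step in the second paragraph, are in this correct direction; in the final paragraph you again quote the reversed inclusion, though there the conclusion survives because minimality supplies the other containment). Second, your appeal to Proposition~\ref{asymptotic_test}~iv) needs a single fixed nonzero $\frc$ with $\frc\cdot\fra_m\subseteq\frb_m$ for all $m\gg 0$; a nonzero $\fra_{|m_0r(A-A')|}$ for one $m_0$ does not serve, since it need not sit inside $\fra_{|mr(A-A')|}$ for all large $m$. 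The clean fix is to choose $r$ so that $r(A-A')$ is integral and globally generated, making $\fra_{|mr(A-A')|}=\cO_X$ for all $m$, after which the inclusion of graded sequences is immediate and no auxiliary $\frc$ is needed — this is how the paper argues.
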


\begin{proof}
Note first that if $A_1$ and $A_2$ are ample divisors with both $D+A_1$ and
$D+A_2$ having $\QQ$-coefficients and such that $A_1-A_2$ is ample, then
$\fra_{|m(D+A_2)|}\subseteq \fra_{|m(D+A_1)|}$ for all $m\gg 0$. This implies
$$\tau(\lambda\cdot\parallel D+A_2\parallel)\subseteq
\tau(\lambda\cdot\parallel D+A_1\parallel).$$

 Choose a very ample divisor $H$
on $X$ and put $n=\dim(X)$.
Suppose now that $B$ is a fixed ample divisor such that $B-\lambda D$ is ample.
If $A$ is an ample $\RR$-divisor such that $D+A$ is a $\QQ$-divisor
and $B-\lambda(D+A)$ is ample, then Theorem~\ref{thm_main} implies that 
$$\tau(\lambda\cdot\parallel D+A\parallel)\otimes_{\cO_X}\cO_X(K_X+(n+1)H+B)$$
is globally generated (if $\lambda$ is not rational, then we apply the theorem 
to some rational $\lambda'>\lambda$ such that $B-\lambda'(D+A)$ is still ample
and $\tau(\lambda\cdot\parallel D+A\parallel)=\tau(\lambda'\cdot\parallel D+A\parallel)$).
In particular, we see that $\tau(\lambda\cdot\parallel D+A\parallel)$ is determined by the subspace 
$$W_A:=H^0(X, \tau(\lambda\cdot\parallel D+A\parallel)\otimes_{\cO_X}\cO_X(K_X+(n+1)H+B))
$$
$$\subseteq W:=H^0(X, \cO_X(K_X+(n+1)H+B)).$$

Since $W$ is finite dimensional, we can find some $A$ as above such that $W_A$
is minimal among all such subspaces. Given any ample $A_1$ such that $D+A_1$
is a $\QQ$-divisor, we may choose an ample $A_2$ such that both $A-A_2$ and
$A_1-A_2$ are ample $\QQ$-divisors. As we have seen, this implies
\begin{equation}\label{eq_Hacon_test_ideal}
\tau(\lambda\cdot\parallel D+A_2\parallel)\subseteq\tau(\lambda\cdot\parallel D+A_1\parallel)\,\,
\text{and}\,\,\tau(\lambda\cdot\parallel D+A_2\parallel)\subseteq\tau(\lambda\cdot\parallel D+A\parallel).
\end{equation}
Note that $B-\lambda(D+A_2)$ is ample, and the second inclusion in (\ref{eq_Hacon_test_ideal})
implies in particular that $W_{A_2}\subseteq W_A$. By the minimality in the choice of $A$
we have, in fact, $W_A=W_{A_2}$, and therefore 
$$\tau(\lambda\cdot\parallel D+A_2\parallel)=\tau(\lambda\cdot\parallel D+A\parallel)
\subseteq \tau(\lambda\cdot\parallel D+A_1\parallel).$$
This shows that $\tau(\lambda\cdot\parallel D+A\parallel)$ satisfies the minimality requirement in the proposition.

Suppose now that
${\mathcal U}$
consists of the classes of those $E$ such that 
 $A-E$ is ample. In this case ${\mathcal U}$ is an open neighborhood of the origin in
 $\NS^1(X)_{\RR}$ which 
satisfies the last assertion in the proposition. Indeed, if $A'$ is ample such that
$D+A'$ is a $\QQ$-divisor and the class of $A'$ lies in ${\mathcal U}$, then 
the argument at the beginning of the proof gives the inclusion $\tau(\lambda\cdot\parallel D+A'\parallel)
\subseteq\tau(\lambda\cdot\parallel D+A\parallel)$, while the reverse inclusion follows from the minimality of $\tau(\lambda \cdot\parallel D+A\parallel)$, which we have proved.
\end{proof}

In the next proposition we list several properties of this new version of asymptotic test ideals.

\begin{proposition}\label{properties_new_definition}
Let $D$ be a pseudo-effective $\RR$-divisor on $X$ and let $\lambda\in\RR_{\geq 0}$.
\begin{enumerate}
\item[i)] If $E$ is a pseudo-effective $\RR$-divisor on $X$, numerically equivalent to $D$, then 
$$\tau_+(\lambda\cdot\parallel D\parallel)=\tau_+(\lambda\cdot\parallel E\parallel).$$
\item[ii)] If $\mu\geq\lambda$, then 
$$\tau_+(\mu\cdot\parallel D\parallel)\subseteq
\tau_+(\lambda\cdot\parallel D\parallel).$$
\item[iii)] If $B$ is a nef $\RR$-divisor, then
$$\tau_+(\lambda\cdot \parallel D\parallel)\subseteq 
\tau_+(\lambda\cdot \parallel D+B\parallel).$$
\item[iv)] We have $\tau_+(\lambda\cdot\parallel D\parallel)=
\tau_+(\parallel\lambda D\parallel)$.
\end{enumerate}
\end{proposition}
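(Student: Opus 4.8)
The plan is to deduce each of the four properties of $\tau_+$ from the corresponding property of the ordinary asymptotic test ideal $\tau(\lambda\cdot\parallel -\parallel)$ established in Propositions~\ref{asymptotic_test} and~\ref{num_equivalence}, together with the characterization of $\tau_+$ as the common (minimal) value of $\tau(\lambda\cdot\parallel D+A\parallel)$ for $A$ in a small neighborhood ${\mathcal U}$ of the origin, given by Proposition~\ref{Hacon_test_ideal}. The recurring mechanism is: any two sufficiently small ample perturbations can be dominated by a common small ample perturbation, so comparisons between $\tau_+$'s reduce to comparisons between big-divisor asymptotic test ideals, where the earlier results apply.

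For part i), if $D\equiv E$ then for any ample $\QQ$-divisor $A$ with $D+A$ (equivalently $E+A$) a $\QQ$-divisor, the divisors $D+A$ and $E+A$ are numerically equivalent big $\QQ$-divisors, so Proposition~\ref{num_equivalence} gives $\tau(\lambda\cdot\parallel D+A\parallel)=\tau(\lambda\cdot\parallel E+A\parallel)$; taking $A$ small enough that both sides compute the respective $\tau_+$ (using Proposition~\ref{Hacon_test_ideal}, noting one may intersect the two neighborhoods ${\mathcal U}$), we conclude. For part ii), fix a small ample $\QQ$-divisor $A$ in the intersection of the neighborhoods provided by Proposition~\ref{Hacon_test_ideal} for the exponents $\lambda$ and $\mu$; then $\tau_+(\mu\cdot\parallel D\parallel)=\tau(\mu\cdot\parallel D+A\parallel)\subseteq\tau(\lambda\cdot\parallel D+A\parallel)=\tau_+(\lambda\cdot\parallel D\parallel)$, the middle inclusion being Proposition~\ref{asymptotic_test} i). For part iv), when $\lambda\in\QQ_{\geq 0}$ we already know $\tau(\lambda\cdot\parallel D+A\parallel)=\tau(\parallel\lambda(D+A)\parallel)=\tau(\parallel\lambda D+\lambda A\parallel)$, and since $\lambda A$ ranges over (a cofinal family of) small ample $\QQ$-divisor perturbations of $\lambda D$ as $A$ does of $D$, the minima agree; for irrational $\lambda$ one picks, as in the proof of Proposition~\ref{Hacon_test_ideal}, a rational $\lambda'>\lambda$ with $\tau(\lambda\cdot\parallel D+A\parallel)=\tau(\lambda'\cdot\parallel D+A\parallel)$ for the relevant $A$, reducing to the rational case — here one must check the reduction is uniform enough to identify the two minimal ideals, which I expect to be the most delicate bookkeeping.

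Part iii) is the one requiring slightly more care, and I expect it to be the main obstacle. Let $B$ be nef. The inclusion to prove is $\tau_+(\lambda\cdot\parallel D\parallel)\subseteq\tau_+(\lambda\cdot\parallel D+B\parallel)$. Choose a small ample $\QQ$-divisor $A'$ computing $\tau_+(\lambda\cdot\parallel D+B\parallel)$, i.e. with the class of $A'$ in the neighborhood ${\mathcal U}'$ attached to $D+B$ by Proposition~\ref{Hacon_test_ideal}. Then $A:=B+A'$ is ample (sum of nef and ample) and $(D+B)+A'=D+A$, so $\tau_+(\lambda\cdot\parallel D+B\parallel)=\tau(\lambda\cdot\parallel D+A\parallel)$. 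On the other hand $A$ is an ample $\QQ$-divisor with $D+A$ a $\QQ$-divisor, so by the defining minimality of $\tau_+(\lambda\cdot\parallel D\parallel)$ we have $\tau_+(\lambda\cdot\parallel D\parallel)\subseteq\tau(\lambda\cdot\parallel D+A\parallel)=\tau_+(\lambda\cdot\parallel D+B\parallel)$, as desired. The only subtlety is ensuring $D+A'$ and $D+B$ are genuinely $\QQ$-divisors of the right shape so the formula $(D+B)+A'=D+A$ lands us in the allowed class of perturbations; this is handled by choosing $A'$ rational, which is permissible since the rational ample classes are dense in ${\mathcal U}'$ and $\tau_+$ is computed by any such $A'$.
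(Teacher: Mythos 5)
Parts ii) and iii) of your argument are correct, and your iii) is in fact cleaner than the paper's: noting that $A:=B+A'$ is ample and $D+A=(D+B)+A'$ is a $\QQ$-divisor lets you quote the minimality in Proposition~\ref{Hacon_test_ideal} directly, whereas the paper introduces an auxiliary ample $A'$ with $A+B-A'$ ample and a base-locus comparison. (Your closing ``subtlety'' about taking $A'$ rational is misplaced: if $D+B$ is irrational no rational $A'$ is admissible; what you need, and what Proposition~\ref{Hacon_test_ideal} gives, is an ample $\RR$-divisor $A'$ with $(D+B)+A'$ a $\QQ$-divisor computing $\tau_+(\lambda\cdot\parallel D+B\parallel)$, and your argument works verbatim with such an $A'$.) Part i), however, contains a genuine error: you claim $D+A$ is a $\QQ$-divisor ``equivalently'' $E+A$ is, which holds only when $D-E$ is a $\QQ$-divisor. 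Numerically equivalent pseudo-effective $\RR$-divisors can differ by an irrational numerically trivial divisor, and moreover the admissible $A$ are ample $\RR$-divisors (if $D$ is irrational there may be no ample $\QQ$-divisor $A$ with $D+A$ rational at all), so your route through Proposition~\ref{num_equivalence} only covers a special case. The correct argument, which is the paper's entire proof of i), is to observe that for any admissible $A$ for $D$ one has $D+A=E+(A+D-E)$ with $A+D-E\equiv A$ ample, so the two families of ideals defining the two $\tau_+$'s literally coincide; no appeal to Proposition~\ref{num_equivalence} is needed.

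In iv) your rational case, via the bijection $A\mapsto\lambda A$ between admissible perturbations of $D$ and of $\lambda D$, is correct. But for irrational $\lambda$ you have only gestured at a ``reduction to the rational case,'' and this is exactly where the content of iv) lies. The reduction as described does not go through: the rational $\lambda'>\lambda$ with $\tau(\lambda\cdot\parallel D+A\parallel)=\tau(\lambda'\cdot\parallel D+A\parallel)$ can only be chosen after fixing $A$ (the constancy range in Proposition~\ref{asymptotic_test} iii) depends on $A$), so no single $\lambda'$ identifies the two minimal ideals wholesale; and even granting $\tau_+(\lambda'\cdot\parallel D\parallel)=\tau_+(\parallel\lambda' D\parallel)$, nothing established so far controls how $\tau_+$ changes when the exponent drops from $\lambda'$ to $\lambda$ or when the divisor $\lambda' D$ is replaced by $\lambda D$ (they differ by the pseudo-effective, generally non-nef, divisor $(\lambda'-\lambda)D$). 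What is actually required—and what the paper does—is two separate perturbation arguments. For $\tau_+(\parallel\lambda D\parallel)\subseteq\tau_+(\lambda\cdot\parallel D\parallel)$: take $A$ computing the right-hand side, take rational $\lambda'>\lambda$ close, and check that $\lambda'(D+A)-\lambda D=(\lambda'-\lambda)D+\lambda' A$ is ample for $\lambda'-\lambda$ small, so that $\tau(\parallel\lambda'(D+A)\parallel)$ belongs to the family computing the left-hand side. For the reverse: take $B$ computing $\tau_+(\parallel\lambda D\parallel)$, choose ample $A'$ with $B-\lambda A'$ ample and $D+A'$ a $\QQ$-divisor, and rational $\mu>\lambda$ close so that $(\lambda D+B)-\mu(D+A')$ is ample, whence $\tau(\lambda\cdot\parallel D+A'\parallel)=\tau(\parallel\mu(D+A')\parallel)\subseteq\tau(\parallel\lambda D+B\parallel)$ by monotonicity, and conclude by minimality. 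None of these steps appears in your sketch, so iv) is incomplete precisely in the case it is meant to address.
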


\begin{proof}
The assertion in i) follows from definition, once we note that if $A$ is ample, then we can write 
$D+A=E+(A+D-E)$ and $A+D-E$ is ample. The inclusion in ii) follows from definition and the 
fact that for every ample $\RR$-divisor $A$ such that $D+A$ has rational coefficients, we have
$$\tau(\mu\cdot\parallel D+A\parallel)\subseteq
\tau(\lambda\cdot\parallel D+A\parallel).$$

In order to prove iii), let $A$ be ample such that $D+B+A$ is a $\QQ$-divisor, and 
the class of $A$
in $\NS^1(X)_{\RR}$ is in a small enough neighborhood of the origin, so that
$$\tau_+(\lambda\cdot \parallel D+B\parallel)=\tau(\lambda\cdot \parallel D+B+A\parallel).$$
Since $B$ is nef, $A+B$ is ample, hence we can find an ample divisor $A'$ such that
$A+B-A'$ is ample and $D+A'$ is a $\QQ$-divisor. In this case
$$\tau(\lambda\cdot \parallel D+B+A\parallel)\supseteq \tau(\lambda\cdot
\parallel D+A'\parallel)\supseteq\tau_+(\lambda\cdot \parallel D\parallel),$$
where the first inclusion follows from the fact that $\fra_{|m(D+A+B)|}\supseteq
\fra_{|m(D+A')|}$ for all $m$ divisible enough. 

Let us now prove iv). 
Suppose that $A$ is an ample divisor such that $D+A$ has rational coefficients and
the class of $A$ in $\NS^1(X)_{\RR}$ lies in a sufficiently small neighborhood of the origin.
If $\lambda'>\lambda$ is rational and close enough to $\lambda$ (depending on $A$), then
$$\tau_+(\lambda\cdot\parallel D\parallel)=\tau(\lambda\cdot\parallel D+A\parallel)=
\tau(\lambda'\cdot \parallel D+A\parallel)=\tau(\parallel \lambda'(D+A)\parallel).$$
On the other hand, the difference $\lambda'(D+A)-\lambda D=(\lambda'-\lambda)D+\lambda'A$
is ample if $\lambda'-\lambda$ is small enough, hence
$$\tau_+(\parallel \lambda D\parallel)\subseteq \tau(\parallel\lambda'(D+A)\parallel)=
\tau_+(\lambda\cdot\parallel D\parallel).$$
In order to prove the reverse inclusion, let us choose an ample $\RR$-divisor $B$ such that
$\lambda D+B$ is a $\QQ$-divisor and $\tau_+(\parallel \lambda D\parallel)=
\tau(\parallel \lambda D+B\parallel)$. Since $B$ is ample, we can choose an ample
$\RR$-divisor $A'$ such that $B-\lambda A'$ is ample and $D+A'$ is a $\QQ$-divisor. 
We can choose now $\mu>\lambda$ such that $\mu\in\QQ$ and $\mu-\lambda$ is small enough,
so that 
$$(\lambda D+B)-\mu(D+A')=(\lambda-\mu)D+(B-\mu A')$$ is ample. Furthermore, since $\mu-\lambda\ll 1$, we have
$$\tau(\lambda\cdot\parallel D+A'\parallel)=\tau(\mu\cdot\parallel D+A'\parallel)=
\tau(\parallel \mu(D+A')\parallel)\subseteq \tau(\parallel \lambda D+B\parallel)=
\tau_+(\parallel \lambda D\parallel),$$
hence by definition we obtain $\tau_+(\lambda\cdot \parallel D\parallel)\subseteq\tau_+(\parallel
\lambda D\parallel)$.
This completes the proof of iv). 
\end{proof}

\begin{remark}
In general, even for a 
big $\QQ$-divisor $D$, the two ideals $\tau_+(\lambda\cdot\parallel D\parallel)$
and
$\tau(\lambda\cdot\parallel D\parallel)$ might be different. Suppose, for example,
 that $\pi\colon X\to W$ is the blow-up
of a smooth projective variety $W$ of dimension $\geq 2$ at a point, and $E$ is the exceptional divisor. 
Let $H$ be a very ample divisor on $W$ such that $\pi^*(H)-E$ is ample. 
Note that for every non-negative integers $r$ and $s$, the ideal
$\fra_{|r\pi^*(H)+sE|}$ is equal to $\cO_X(-sE)$. Using this, 
it is easy to see that if $D=\pi^*(H)+E$, then for every positive integer $m$ we have
$$\tau(m\cdot\parallel D\parallel)=\cO_X(-mE)\,\,\text{and}\,\,
\tau_+(m\cdot\parallel D\parallel)=\cO_X(-(m-1)E).$$
\end{remark}

\section{The non-nef locus of big divisors}

In this section we prove  Theorems~B and C stated in the Introduction (in a more general version, in which we sometimes do not need to restrict to closed points). As in the previous sections,
we assume that $X$ is a smooth projective variety over an algebraically closed field $k$
of characteristic $p>0$. Let $n=\dim(X)$.

\begin{theorem}\label{function_big_cone}
Let $Z$ be an irreducible proper closed subset of $X$.
For a big $\QQ$-divisor $D$, the value $\ord_Z(\parallel D\parallel)$ only depends on the numerical equivalence class of $D$. Furthermore, the function 
$D\to \ord_Z(\parallel D\parallel)$ extends as a continuous function
on ${\rm Big}(X)_{\RR}$, also denoted by 
$\ord_Z(\parallel-\parallel)$.
\end{theorem}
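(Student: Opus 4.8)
The strategy is to reduce everything to the already-established facts about asymptotic test ideals, using Proposition~\ref{compute_test} to replace $\ord_Z(\parallel D\parallel)$ by $\ord_Z(\frb^D_\bullet)$ where $\frb^D_m = \tau(\parallel mD\parallel)$, and then to exploit the numerical invariance (Proposition~\ref{num_equivalence}) and the uniform global generation of Theorem~\ref{thm_main}. First I would establish the numerical invariance statement: if $D$ and $E$ are numerically equivalent big $\QQ$-divisors, then after clearing denominators we may assume they have integer coefficients, and Proposition~\ref{num_equivalence} gives $\tau(\parallel mD\parallel) = \tau(\parallel mE\parallel)$ for all $m$. Hence the sequences $\frb^D_\bullet$ and $\frb^E_\bullet$ coincide, so $\ord_Z(\frb^D_\bullet) = \ord_Z(\frb^E_\bullet)$, and by Proposition~\ref{compute_test} we conclude $\ord_Z(\parallel D\parallel) = \ord_Z(\parallel E\parallel)$. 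This lets us regard $\ord_Z(\parallel -\parallel)$ as a function on the rational points of the open cone $\mathrm{Big}(X)_{\RR}$.

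The next step is to prove the homogeneity and convexity (sub/superadditivity) needed to get Lipschitz-type control. Homogeneity $\ord_Z(\parallel \lambda D\parallel) = \lambda\,\ord_Z(\parallel D\parallel)$ for $\lambda \in \QQ_{>0}$ is already recorded in \S 2. For convexity I would show that $\ord_Z(\parallel D + E\parallel) \le \ord_Z(\parallel D\parallel) + \ord_Z(\parallel E\parallel)$ (this is the Remark after the definition of $\ord_Z(\parallel-\parallel)$), giving subadditivity. The key quantitative input comes from Theorem~\ref{thm_main}: I want to show that adding an ample divisor decreases $\ord_Z$ in a controlled way, and more precisely that for a fixed very ample $H$ and a big $\QQ$-divisor $D$, one has a bound like $\ord_Z(\parallel D\parallel) \le \ord_Z(\parallel D + \tfrac{1}{k}H\parallel) + (\text{const})/k$ — or rather the reverse comparison obtained by writing $D$ itself as (something big) plus a small ample piece. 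Concretely, since $D$ is big we can write $\ell D \sim_{\mathrm{lin}} (K_X + (n+1)H) + G$ with $G$ effective for some $\ell$; applying Theorem~\ref{thm_main} exactly as in the proof of Proposition~\ref{num_equivalence} yields $\cO_X(-G)\cdot \tau(\parallel mD\parallel) \subseteq \fra_{|m D'|}$ for a numerically nearby $D'$, which translates into an inequality $\ord_Z(\parallel D'\parallel) \le \ord_Z(\parallel D\parallel) + \ord_Z(\cO_X(-G))$. Combining this with subadditivity and homogeneity, and letting the multiple $\ell$ (hence the ``error'' $G$) be absorbed by taking large multiples, produces a genuine local Lipschitz estimate for $\ord_Z(\parallel-\parallel)$ on the rational points of any compact subset of $\mathrm{Big}(X)_{\RR}$.

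Once such a Lipschitz bound on rational classes in the (open) big cone is in hand, continuity and the extension to all real classes follow formally: a uniformly Lipschitz function defined on the dense set of rational points of an open convex cone extends uniquely to a continuous (indeed locally Lipschitz) function on the whole cone, and the extension automatically inherits numerical invariance, homogeneity, and subadditivity by density.

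**The main obstacle.** The hard part will be extracting the uniform Lipschitz estimate from Theorem~\ref{thm_main}, i.e.\ controlling how $\ord_Z(\parallel D\parallel)$ changes under a small perturbation of $D$ uniformly over a compact region of the big cone. The global generation statement only directly compares $\tau(\parallel mD\parallel)$ with a base ideal $\fra_{|mE|}$ up to twisting by a fixed effective divisor $G$ coming from the bigness of $D$, and one must check that the resulting constant (essentially $\ord_Z(G)/\ell$ together with $\mathrm{codim}(Z,X)$ from Proposition~\ref{estimate_order}) can be made uniform — in particular that the choice of $\ell$ and $G$ can be taken locally uniformly as $D$ varies — and that replacing $D$ by a large multiple to kill the error does not destroy the comparison. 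Handling the passage between the two directions of comparison (bounding $\ord_Z(\parallel D\parallel)$ above by $\ord_Z$ of a nearby class, and conversely using subadditivity $\ord_Z(\parallel D\parallel) \le \ord_Z(\parallel D'\parallel) + \ord_Z(\parallel D - D'\parallel)$ together with a bound on $\ord_Z(\parallel A\parallel)$ for small ample $A$) is where the bookkeeping is delicate; everything else is formal.
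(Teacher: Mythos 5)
Your treatment of the first assertion (numerical invariance) is exactly the paper's: reduce to integer coefficients by homogeneity, replace $\ord_Z(\parallel D\parallel)$ by the orders of vanishing of the asymptotic test ideals via Proposition~\ref{compute_test}, and invoke Proposition~\ref{num_equivalence}. For the second assertion, however, the paper simply quotes \cite[\S 3]{ELMNP}, observing that the argument there is characteristic-free once numerical invariance is available; you instead attempt to re-derive the continuity, and it is here that your sketch has a genuine gap.

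All of the mechanisms you propose only yield the ``easy'' inequality, namely that $\ord_Z(\parallel-\parallel)$ does not increase when one adds a nef class. Your use of Theorem~\ref{thm_main} as in the proof of Proposition~\ref{num_equivalence} requires the relevant difference to be nef: with $G\sim \ell D-(K_X+(n+1)H)$, global generation of $\tau(\parallel (m-\ell)D\parallel)\otimes_{\cO_X}\cO_X(mD'-G)$ needs $m(D'-D)$ (up to the fixed bounded terms) to be nef, so the inclusion $\cO_X(-G)\cdot\tau(\parallel mD\parallel)\subseteq\fra_{|mD'|}$ does not hold for an arbitrary ``numerically nearby'' $D'$, only for $D'=D+(\mathrm{nef})$ --- and for ample perturbations the same conclusion follows more cheaply from subadditivity together with $\ord_Z(\parallel A\parallel)=0$ for $A$ ample. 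Likewise, subadditivity in the form $\ord_Z(\parallel D\parallel)\le\ord_Z(\parallel D'\parallel)+\ord_Z(\parallel D-D'\parallel)$ is unusable precisely in the case you need, $D'=D+tA$ (then $D-D'$ is anti-ample), and ``writing $D$ as something big plus a small ample piece'' only bounds $\ord_Z(\parallel D\parallel)$ by $\ord_Z(\parallel D-tA\parallel)$, which is again the wrong direction. The missing idea is the scaling trick based on Kodaira's lemma: write $D\equiv a+e$ with $a$ an ample and $e$ an effective $\QQ$-class; then $(1+s)D\equiv (D+sa)+se$, so homogeneity and subadditivity give $\ord_Z(\parallel D+sa\parallel)\ge (1+s)\,\ord_Z(\parallel D\parallel)-s\cdot\ord_Z(e)$, i.e.\ the reverse estimate with Lipschitz constant controlled by $\ord_Z(e)$; comparing an arbitrary small perturbation with $\pm sa$ (choosing $s$ proportional to the size of the perturbation so that the difference is ample) then yields the two-sided local Lipschitz bound, after which the extension by density is formal. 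This is exactly the characteristic-free argument of \cite[\S 3]{ELMNP} that the paper invokes, so the fix is standard; but as written your plan does not contain the step that produces the lower bound, and you yourself flag it as the unresolved ``main obstacle''.
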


\begin{proof}
For the first assertion, by homogeneity we may assume that $D$ has integer coefficients.
If $\frb_m=\tau(m\cdot\parallel D\parallel)$, then Proposition~\ref{compute_test} implies
$\ord_Z(\parallel D\parallel)=\sup_{m\geq 1}\frac{\ord_Z(\frb_m)}{m}$.
Since the ideals $\frb_m$ only depend on the numerical equivalence class of $D$ by
Proposition~\ref{num_equivalence}, we obtain the first assertion in the theorem.
The second assertion now follows as in \cite[\S 3]{ELMNP}, where the argument is
characteristic-free.
\end{proof}

\begin{theorem}\label{char_nonnef}
Let $Z$ be an irreducible proper closed subset of $X$ and assume that either the ground field
$k$ is uncountable, or $Z$ consists of a point.
If $D$ is a big divisor on $X$, then the following are equivalent:
\begin{enumerate}
\item[i)] $Z$ is not contained in $\BB_-(D)$.
\item[ii)] There is a divisor $G$ on $X$ such that $Z$ is not contained in the base locus of
$|mD+G|$ for every $m\geq 1$. 
\item[iii)] There is a real number $M$ such that $\ord_Z(\fra_{|mD|})\leq M$ for all $m$ with $|mD|$ non-empty.
\item[iv)] $\ord_Z(\parallel D\parallel)=0$.
\item[v)] For every $m\geq 1$, the ideal $\tau(\parallel mD\parallel)$ does not vanish along $Z$.
\end{enumerate}
\end{theorem}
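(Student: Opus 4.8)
The strategy is to prove the chain of implications $i)\Rightarrow ii)\Rightarrow iii)\Rightarrow iv)\Rightarrow v)\Rightarrow i)$, which is the natural cycle since each step is either a formal manipulation or an application of the uniform global generation result (Theorem~\ref{thm_main}) and the comparison between asymptotic orders of vanishing and asymptotic test ideals (Proposition~\ref{compute_test}, Proposition~\ref{estimate_order}).

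For $i)\Rightarrow ii)$: if $Z\not\subseteq\BB_-(D)$, then by Proposition~\ref{properties_nonnef} vi) (applied with a sequence $A_m\to 0$) there is some ample $\QQ$-divisor $A$ with $D+A$ a $\QQ$-divisor and $Z\not\subseteq\BB(D+A)$; clearing denominators and taking $G$ to be an appropriate multiple of $A$ gives the statement, using that $\BB$ is stable under passing to multiples. (One must be slightly careful: $Z\not\subseteq\BB(D+A)$ means $Z\not\subseteq{\rm Bs}(q(D+A))$ for $q$ divisible enough, and then for any $m$ one bounds ${\rm Bs}(mD+G)$ using the sub-multiplicativity ${\rm Bs}((a+b)\cdot)\subseteq{\rm Bs}(a\cdot)\cup{\rm Bs}(b\cdot)$ after writing $mD+G$ in terms of copies of $q(D+A)$ plus a bounded correction; here bigness of $D$ is used so that the correction terms still have sections.) For $ii)\Rightarrow iii)$: from $|mD+G|$ having a section not vanishing along $Z$, and $G$ fixed, one gets $\ord_Z(\fra_{|mD|})\le\ord_Z(\fra_{|mD+G|})+\text{const}\le\ord_Z|G|+\text{const}$ uniformly in $m$, using subadditivity of $\ord_Z$ (equation~(\ref{ord_subadditive})) and $\fra_{|mD|}\cdot\fra_{|G|}\subseteq\fra_{|mD+G|}$ together with the assumption. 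For $iii)\Rightarrow iv)$: $\ord_Z(\parallel D\parallel)=\inf_m\ord_Z(\fra_{|mD|})/m\le M/m\to 0$.

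For $iv)\Rightarrow v)$: this is where the new machinery enters. By Proposition~\ref{estimate_order} applied to $\fra=\fra_{|kD|}$ with exponent $\lambda=m/k$, for $k$ divisible enough so that $\tau(m\cdot\parallel D\parallel)=\tau(\fra_{|kD|}^{m/k})$ we get
$$\ord_Z(\tau(m\cdot\parallel D\parallel))>\frac{m}{k}\ord_Z(\fra_{|kD|})-{\rm codim}(Z,X)\ge m\cdot\ord_Z(\parallel D\parallel)-{\rm codim}(Z,X)=-{\rm codim}(Z,X),$$
so $\ord_Z(\tau(m\cdot\parallel D\parallel))\ge 0$, which by definition of $\ord_Z$ and the convention $\ord_Z(0)=\infty$ means precisely that $\tau(m\cdot\parallel D\parallel)$ is nonzero and does not vanish along $Z$ (i.e. $\tau(m\cdot\parallel D\parallel)\cdot\cO_{X,Z}=\cO_{X,Z}$). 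The last and crucial implication $v)\Rightarrow i)$ uses Theorem~\ref{thm_main}: fix a very ample $H$, and since $D$ is big choose $\ell$ with $\ell D\equiv K_X+(n+1)H+A$ for some ample $A$ (or linearly equivalent to such plus an effective divisor). Then for $m\gg 0$, writing $mD=(m-\ell)D+\ell D$ and applying the theorem with $E$ chosen so that $E-(m-\ell)D$ is nef, one obtains that $\tau((m-\ell)\cdot\parallel D\parallel)\otimes\cO_X(mD+(\text{fixed divisor}))$ is globally generated; since by $v)$ this test ideal does not vanish along $Z$, the global sections separate $Z$, giving $Z\not\subseteq{\rm Bs}(mD+G)$ for a fixed $G$ and all large $m$, and hence $Z\not\subseteq\BB_-(D)$ (here one invokes Proposition~\ref{properties_nonnef} vi) or the hypothesis on $k$/on $Z$ being a point to pass from "not in a countable union of base loci of twists" to "not in $\BB_-$").

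**Main obstacle.** The routine-looking steps $i)\Rightarrow ii)$ and $v)\Rightarrow i)$ both hinge on correctly managing the "fixed divisor $G$'' and the dependence on $m$: in $v)\Rightarrow i)$ the delicate point is that Theorem~\ref{thm_main} produces global generation of $\tau(\lambda\cdot\parallel D\parallel)\otimes\cO_X(K_X+E+dH)$ with $E-\lambda D$ nef, so to get a twist of the form $mD+G$ with $G$ \emph{independent of} $m$ one must choose $\lambda=m$ (after the replacement $D\rightsquigarrow$ multiple, $\lambda\rightsquigarrow m$), take $E$ with $E-mD$ nef and $E-mD$ varying in a bounded family — exactly the kind of bookkeeping done in the Claim inside the proof of Theorem~\ref{thm_main} and in Proposition~\ref{num_equivalence}. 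I expect the cleanest route is to mimic the argument of Proposition~\ref{num_equivalence}: use bigness of $D$ to absorb $K_X+(n+1)H$ into an effective divisor $G_0$ and deduce $\tau((m-\ell)\cdot\parallel D\parallel)\subseteq\fra_{|mD-G_0|}$, then twist by $\cO_X(-G_0)$ and use that $\tau((m-\ell)\cdot\parallel D\parallel)$ not vanishing along $Z$ forces $Z\not\subseteq{\rm Bs}(mD)$ up to the fixed shift. Everything else is formal once Propositions~\ref{compute_test} and~\ref{estimate_order} are in hand.
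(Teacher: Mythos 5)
Your steps ii)$\Rightarrow$iii), iii)$\Rightarrow$iv), and the use of Theorem~\ref{thm_main} to go from v) back to membership outside a countable union of base loci are all fine and essentially match the paper. But two links of your cycle are broken. First, iv)$\Rightarrow$v) as you wrote it is a non-sequitur: Proposition~\ref{estimate_order} bounds $\ord_Z(\tau(\fra^{\lambda}))$ from \emph{below}, so it can never show that a test ideal does not vanish along $Z$; and the conclusion you draw, that $\ord_Z(\tau(m\cdot\parallel D\parallel))\geq 0$ ``means precisely'' non-vanishing along $Z$, is false --- every nonzero ideal has $\ord_Z\geq 0$, whereas non-vanishing along $Z$ means $\ord_Z=0$. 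What iv)$\Rightarrow$v) needs is the opposite comparison, $\ord_Z(\tau(m\cdot\parallel D\parallel))/m\leq \ord_Z(\parallel D\parallel)$, which is exactly what Proposition~\ref{compute_test} provides: with $\frb_m=\tau(m\cdot\parallel D\parallel)$, subadditivity gives $\frb_{mr}\subseteq\frb_m^r$, hence $\ord_Z(\frb_m)/m\leq\ord_Z(\frb_{mr})/(mr)\leq\ord_Z(\fra_{|mrD|})/(mr)\to\ord_Z(\parallel D\parallel)=0$. Proposition~\ref{estimate_order} is the tool for the reverse direction v)$\Rightarrow$iv), not this one; the paper simply records iv)$\Leftrightarrow$v) as a consequence of Proposition~\ref{compute_test}.

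Second, and more seriously, your i)$\Rightarrow$ii) is a genuine gap that the sketched bookkeeping cannot repair. From i) you know $Z\not\subseteq{\rm Bs}(\ell q(D+A))$ for a fixed ample $A$ and suitable $q,\ell$, but to write $mD+G$ (with $G$ fixed) in terms of copies of $q(D+A)$ you need about $m/q$ copies, so the ``correction'' is $rD+G-aqA$ with $a\approx m/q\to\infty$: it is unbounded, eventually not even pseudo-effective, and bigness of $D$ does not save it. Likewise, taking $G$ ample and writing $mD+G=m(D+G/m)$ only gives $Z\not\subseteq\BB(mD+G)$, i.e.\ information about ${\rm Bs}(\ell(mD+G))$ for divisible $\ell$, not about the single linear system $|mD+G|$ demanded by ii). The uniformity in ii) is precisely what the test-ideal machinery is for, and the paper's cycle is arranged to respect this: it proves v)$\Rightarrow$ii) by applying Theorem~\ref{thm_main} with $\lambda=m$ and $E=mD$ (so $E-\lambda D=0$ is nef), obtaining global generation of $\tau(\parallel mD\parallel)\otimes_{\cO_X}\cO_X(K_X+mD+(n+1)H)$ and hence a member of $|mD+G|$, $G=K_X+(n+1)H$, not vanishing at the generic point of $Z$; it then gets ii)$\Rightarrow$i) from the countability hypothesis on $k$ (or $Z$ a point), and closes the loop with i)$\Rightarrow$iv) using the numerical invariance and continuity of $\ord_Z(\parallel-\parallel)$ on the big cone (Theorem~\ref{function_big_cone}). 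Reorganize your argument along these lines --- i)$\Rightarrow$iv), iv)$\Leftrightarrow$v), v)$\Rightarrow$ii)$\Rightarrow$i), plus your ii)$\Rightarrow$iii)$\Rightarrow$iv) --- and it becomes correct.
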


\begin{proof}
The proof is similar to the one in characteristic zero (see \cite[\S 2]{ELMNP}).
With the notation in the proof of Theorem~\ref{function_big_cone}, we see that
$\ord_Z(\parallel D\parallel)=0$ if and only if $\ord_Z(\frb_m)=0$ for every $m\geq 1$. This proves
the equivalence iv)$\Leftrightarrow$v). 

We now prove the implications v)$\Rightarrow$ii)$\Rightarrow$i)$\Rightarrow$iv).
Let us show v)$\Rightarrow$ii). Let $H$ be a very ample divisor on $X$ and let
$G=K_X+(n+1)H$.
It follows from Theorem~\ref{thm_main} that the sheaf
$\tau(\parallel mD\parallel)\otimes_{\cO_X}\cO_X(mD+G)$ is globally generated.
If v) holds, this implies that for every $m\geq 1$ there is a divisor in $|mD+G|$ that does not vanish at the generic point of $Z$, hence ii). 

For the implication ii)$\Rightarrow$i) we make use of the hypothesis on $Z$ and $k$. This implies that if $Z$ is contained in a countable union of closed subsets, then it is contained in one of these sets. Therefore if $Z\subseteq\BB_-(D)$, then there is an ample 
$\QQ$-divisor $A$ such that $Z\subseteq\BB(D+A)$. However, if $G$ is as in ii), then for $m\gg 0$ the divisor $mA-G$ is ample, hence $Z\subseteq \BB(D+A)=\BB(mD+mA)\subseteq\BB(mD+G)$, a contradiction.

We now show
i)$\Rightarrow$iv). Let $(A_i)_{i\geq 1}$ be a sequence of ample $\QQ$-divisors whose classes in $\NS^1(X)_{\RR}$ converge
to zero. It follows from i) that $Z\not\subseteq\BB(D+A_i)$ for any $i$, which in turn implies that $\ord_Z(\fra_{|r(D+A_i)|})=0$ for $r$ divisible enough. We thus deduce that
$\ord_Z(\parallel D+A_i\parallel)=0$ for all $i$, and by continuity of
$\ord_Z(\parallel-\parallel)$ we conclude that $\ord_Z(\parallel D\parallel)=0$. 

In order to complete the proof of the theorem it is enough to also show the
implications ii)$\Rightarrow$iii)$\Rightarrow$iv). Suppose first that ii) holds.
Since $D$ is big, there is a positive integer $m_0$ and an effective divisor $T$ linearly
equivalent to $m_0D-G$. In this case $T+mD+G$ is linearly equivalent to $(m+m_0)D$,
and the assumption in ii) implies $\ord_Z(\fra_{|(m+m_0)D|})\leq\ord_Z\cO_X(-T)$.
This gives 
iii), by taking $M$ to be the maximum of $\ord_Z\cO_X(-T)$ and of those
$\ord_Z(\fra_{|mD|})$, with $m\leq m_0$ and  $|mD|$ non-empty.
 Since iii) clearly implies iv), this completes the proof of the theorem.
\end{proof}

\begin{remark}\label{Q-divisor}
If in Theorem~\ref{char_nonnef} we allow $D$ to have rational coefficients, the equivalence
between i), iv), and v) still holds. Indeed, it is enough to apply the theorem to $rD$, where
$r$ is a positive integer such that $rD$ has integer coefficients.
\end{remark}

\begin{corollary}
If $D$ is a big $\QQ$-divisor on $X$, then $D$ is nef if and only if 
$\tau(\parallel mD\parallel)=\cO_X$ for every $m\geq 1$.
\end{corollary}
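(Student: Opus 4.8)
The plan is to deduce this corollary directly from Theorem~\ref{char_nonnef}, working throughout with closed points so that no hypothesis on the cardinality of $k$ is needed; Remark~\ref{Q-divisor} takes care of the rational coefficients, and the ideals $\tau(\parallel mD\parallel)$ all make sense since $mD$ is big for every $m\geq 1$. For the forward implication I would start from Proposition~\ref{properties_nonnef}~iii): if $D$ is nef then $\BB_-(D)=\emptyset$, so no closed point $x$ of $X$ lies in $\BB_-(D)$. The equivalence i)$\Leftrightarrow$v) of Theorem~\ref{char_nonnef}, applied with $Z=\{x\}$, then says that $\tau(\parallel mD\parallel)$ does not vanish at $x$ for every $m\geq 1$. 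Since $X$ is of finite type over $k$, a coherent ideal sheaf with empty zero locus equals the structure sheaf; as $\tau(\parallel mD\parallel)$ vanishes at no closed point, we conclude $\tau(\parallel mD\parallel)=\cO_X$ for all $m$.

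For the converse, suppose $\tau(\parallel mD\parallel)=\cO_X$ for every $m\geq 1$ and, aiming for a contradiction, that $D$ is not nef; then $\BB_-(D)\neq\emptyset$, and since it is a union of closed subsets it contains a closed point $x$. Applying Theorem~\ref{char_nonnef} with $Z=\{x\}$: condition i) fails at $x$, hence the equivalent condition v) fails, that is, $\tau(\parallel m_0D\parallel)$ vanishes at $x$ for some $m_0\geq 1$, contradicting $\tau(\parallel m_0D\parallel)=\cO_X$. Hence $D$ is nef.

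The only point needing care — and the reason for phrasing everything through closed points — is that the equivalences in Theorem~\ref{char_nonnef} invoke the uncountability of $k$ when $Z$ is not a single point (this enters the implication ii)$\Rightarrow$i)). Restricting attention to closed points circumvents this entirely, since a nonempty $\BB_-(D)$ always contains a closed point and an ideal sheaf equal to $\cO_X$ at every closed point is $\cO_X$. Beyond this bookkeeping, the corollary is immediate from Theorem~\ref{char_nonnef} and Proposition~\ref{properties_nonnef}~iii), and I expect no genuine obstacle.
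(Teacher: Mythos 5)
Your argument is correct and is essentially the paper's own proof: both reduce the statement to the equivalence i)$\Leftrightarrow$v) of Theorem~\ref{char_nonnef} applied at each (closed) point $x$, using that $D$ is nef exactly when no point lies in $\BB_-(D)$ and that an ideal sheaf vanishing at no closed point equals $\cO_X$. Your extra remarks about avoiding the uncountability hypothesis by taking $Z$ a point and about Remark~\ref{Q-divisor} for rational coefficients are just the bookkeeping the paper leaves implicit.
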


\begin{proof}
Note that $D$ is nef if and only if $x\not\in\BB_-(D)$ for every $x\in X$. By Theorem~\ref{char_nonnef}, this is equivalent with the fact that $\tau(\parallel mD\parallel)$ does not vanish at $x$, for every $x\in X$ and every $m\geq 1$.
\end{proof}

\begin{corollary}
If $Z$ and $k$ satisfy the condition in Theorem~\ref{char_nonnef}, then for every big $\RR$-divisor $D$ on $X$, we have $Z\not\subseteq\BB_-(D)$ if and only if 
$\ord_Z(\parallel D\parallel)=0$. 
\end{corollary}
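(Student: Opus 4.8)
The plan is to reduce to the case of $\QQ$-divisors, which is exactly the equivalence i)$\Leftrightarrow$iv) in Theorem~\ref{char_nonnef} (together with Remark~\ref{Q-divisor}), and to pass to an arbitrary big $\RR$-divisor $D$ using the continuity of $\ord_Z(\parallel-\parallel)$ on ${\rm Big}(X)_{\RR}$ from Theorem~\ref{function_big_cone} and the description $\BB_-(D)=\bigcup_{m\geq 1}\BB(D+A_m)$ from Proposition~\ref{properties_nonnef} vi), where $(A_m)_{m\geq 1}$ is a sequence of ample divisors with each $D+A_m$ a $\QQ$-divisor and with the classes of the $A_m$ converging to $0$ in $\NS^1(X)_{\RR}$. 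Throughout I would use the standing hypothesis on $Z$ and $k$ in the form: if $Z$ is contained in a countable union of closed subsets of $X$, then it is contained in one of them.

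For the implication $Z\not\subseteq\BB_-(D)\ \Rightarrow\ \ord_Z(\parallel D\parallel)=0$: since $\BB(D+A_m)\subseteq\BB_-(D)$ for every $m$, the hypothesis gives $Z\not\subseteq\BB(D+A_m)$, hence $\ord_Z(\fra_{|r(D+A_m)|})=0$ for $r$ divisible enough, and therefore $\ord_Z(\parallel D+A_m\parallel)=0$ for every $m$. As $[D+A_m]\to[D]$ and $\ord_Z(\parallel-\parallel)$ is continuous on the big cone, we conclude $\ord_Z(\parallel D\parallel)=0$. This direction does not use the hypothesis on $Z$ and $k$.

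For the converse I would first record two facts that follow from the $\QQ$-divisor case and continuity: (a) $\ord_Z(\parallel-\parallel)$ is subadditive on ${\rm Big}(X)_{\RR}$ — this holds for big $\QQ$-divisors by the remark following the definition of the asymptotic order of vanishing together with homogeneity, and extends to $\RR$-divisors by approximating by big $\QQ$-divisors and using Theorem~\ref{function_big_cone}; and (b) $\ord_Z(\parallel A\parallel)=0$ for every ample $\RR$-divisor $A$ — true for ample $\QQ$-divisors, since these are nef, so $\BB_-$ of them is empty by Proposition~\ref{properties_nonnef} iii) and hence Theorem~\ref{char_nonnef} applies, and the real case follows by approximation. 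Since $\ord_Z(\parallel-\parallel)\geq 0$ on the big cone, (a) and (b) together give $\ord_Z(\parallel D+A\parallel)=0$ whenever $\ord_Z(\parallel D\parallel)=0$ and $A$ is an ample $\RR$-divisor with $D+A$ big. Now suppose $\ord_Z(\parallel D\parallel)=0$ but $Z\subseteq\BB_-(D)$; by Proposition~\ref{properties_nonnef} vi) and the hypothesis on $Z$ and $k$ we get $Z\subseteq\BB(D+A_m)$ for some $m$. Choose an ample $\QQ$-divisor $A'$ with $A_m-A'$ ample, and set $F:=D+A_m-A'$, a big $\QQ$-divisor. Then $\BB(D+A_m)$ occurs among the loci $\BB(F+A'')$ in the union defining $\BB_-(F)$ (take $A''=A'$), so $Z\subseteq\BB_-(F)$; but $\ord_Z(\parallel F\parallel)=\ord_Z(\parallel D+(A_m-A')\parallel)=0$, so Theorem~\ref{char_nonnef} gives $Z\not\subseteq\BB_-(F)$, a contradiction. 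Hence $Z\not\subseteq\BB_-(D)$.

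The main obstacle I anticipate is the bookkeeping in the converse direction: Proposition~\ref{properties_nonnef} vi) only provides the full stable base loci $\BB(D+A_m)$, whereas Theorem~\ref{char_nonnef} is stated for non-nef loci, which forces the ``use up a bit of the ample class'' trick producing $F$; one must check that all the auxiliary divisors ($F$, $A'$, $A_m-A'$) have the required rationality and positivity. The remaining ingredients — subadditivity and the vanishing of $\ord_Z$ on ample classes — are routine consequences of Theorems~\ref{function_big_cone} and~\ref{char_nonnef}.
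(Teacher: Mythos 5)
Your proposal is correct and takes essentially the same route as the paper: both directions reduce to perturbation by small ample classes, using continuity of $\ord_Z(\parallel-\parallel)$ on ${\rm Big}(X)_{\RR}$ for one implication, and for the other the monotonicity $\ord_Z(\parallel D+A\parallel)\leq\ord_Z(\parallel D\parallel)$ (your facts (a) and (b) are precisely the justification of what the paper calls ``easy to see''), the $\QQ$-divisor case of Theorem~\ref{char_nonnef}, and Proposition~\ref{properties_nonnef}~vi) combined with the countability hypothesis on $Z$ and $k$. The only divergence is your detour through $F=D+A_m-A'$, which is unnecessary: Proposition~\ref{properties_nonnef}~vi) gives $\BB_-(D)=\bigcup_m\BB_-(D+A_m)$ and not merely $\bigcup_m\BB(D+A_m)$, so once $\ord_Z(\parallel D+A_m\parallel)=0$ is known you may apply Theorem~\ref{char_nonnef} (with Remark~\ref{Q-divisor}) directly to each big $\QQ$-divisor $D+A_m$, which is exactly what the paper does.
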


\begin{proof}
If $Z$ is not contained in $\BB_-(D)$, then we obtain $\ord_Z(\parallel D\parallel)=0$
arguing as in the proof of the implication i)$\Rightarrow$iv) in Theorem~\ref{char_nonnef}.
Conversely, suppose that we have $\ord_Z(\parallel D\parallel)=0$. Let us consider a sequence  of ample $\RR$-divisors $(A_m)_{m\geq 1}$ whose classes
 in $\NS^1(X)_{\RR}$ converge to zero and such that all $D+A_m$ are $\QQ$-divisors.
It is easy to see that $\ord_Z(\parallel D+A_m\parallel)\leq\ord_Z(\parallel D\parallel)=0$,
hence applying Theorem~\ref{char_nonnef} (see also Remark~\ref{Q-divisor})
we get $Z\not\subseteq\BB_-(D+A_m)$ for every $m$. 
Under our assumptions on $Z$ and $k$ this implies that $Z$ is not contained in $\BB_-(D)$
(see Proposition~\ref{properties_nonnef} vi)).
\end{proof}

\section{The case of pseudo-effective divisors}

The picture at the boundary of the pseudo-effective cone is more complicated. 
In particular,
the function $\ord_Z(\parallel-\parallel)$ might not admit a continuous extension
to the pseudo-effective cone, see \cite[IV.2.8]{Nak}. 
In this section we explain, following the approach in \cite{Nak}, how the results 
in the previous section need to be modified in this context.

If $D$ is a preudo-effective $\RR$-divisor on $X$, then for every ample $\RR$-divisor $A$, we know that
$D+A$ is big. If $Z$ is an irreducible proper closed subset of $X$, then we put
$$\sigma_Z(D):=\sup_A\ord_Z(\parallel D+A\parallel)\in\RR_{\geq 0}\cup\{\infty\},$$
where the supremum is over all ample $\RR$-divisors $A$. Note that if $A_1$ and $A_2$ are ample and $A_1-A_2$ is ample, then $\ord_Z(\parallel D+A_1\parallel)\leq
\ord_Z(\parallel D+A_2\parallel)$. It is then easy to deduce that if $(A_m)_{m\geq 1}$
is a sequence of ample divisors whose classes in $\NS^1(X)_{\RR}$ converge to zero, then 
$\sigma_Z(D)=\lim_{m\to\infty}\ord_Z(\parallel D+A_m\parallel)$. Using the continuity of
$\ord_Z(\parallel-\parallel)$ on the big cone, we see that $\sigma_Z(D)=\ord_Z(\parallel D
\parallel)$ if $D$ is big.

 It is straightforward to see from definition that
$\sigma_Z(D)$ only depends on the equivalence class of $D$. Therefore we may and will
consider $\sigma_Z$ as a function on the pseudo-effective cone of $X$. 

\begin{proposition}\label{semicont}
The function $\sigma_Z$ is lower semi-continuous on the pseudo-effective cone.
\end{proposition}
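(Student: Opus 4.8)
The plan is to reduce lower semi-continuity of $\sigma_Z$ to the continuity of $\ord_Z(\parallel-\parallel)$ on the big cone, which was established in Theorem~\ref{function_big_cone}. Fix a pseudo-effective class $\delta$ and a real number $c<\sigma_Z(\delta)$; I want to show $\sigma_Z>c$ on a neighborhood of $\delta$ in the pseudo-effective cone. Since $\sigma_Z(\delta)=\sup_A\ord_Z(\parallel\delta+A\parallel)$ over ample $\RR$-divisor classes $A$, there is a single ample class $A_0$ with $\ord_Z(\parallel\delta+A_0\parallel)>c$. The class $\delta+A_0$ is big, so by the continuity statement in Theorem~\ref{function_big_cone} there is an open neighborhood $\mathcal{V}$ of $\delta+A_0$ inside ${\rm Big}(X)_\RR$ on which $\ord_Z(\parallel-\parallel)>c$.

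Next I would transport this to a neighborhood of $\delta$. Write $A_0=A_1+A_2$ with $A_1,A_2$ ample (e.g. $A_1=A_2=\tfrac12 A_0$). For every pseudo-effective class $\delta'$ in the neighborhood $\mathcal{W}:=(\delta+A_0-A_1)+(-A_2,A_2)$—more precisely, for $\delta'$ close enough to $\delta$ that $\delta'+A_1$ still lies in $\mathcal{V}$—the class $A_1$ is ample and $\delta'+A_1\in\mathcal{V}$, so $\ord_Z(\parallel\delta'+A_1\parallel)>c$, and hence $\sigma_Z(\delta')\geq\ord_Z(\parallel\delta'+A_1\parallel)>c$. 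Such $\delta'$ form an open neighborhood of $\delta$: the condition "$\delta'+A_1\in\mathcal{V}$" is open in $\delta'$, and it is satisfied at $\delta'=\delta$ since $\delta+A_1=(\delta+A_0)-A_2$ can be taken inside $\mathcal{V}$ by shrinking—more cleanly, just choose $\mathcal{V}$ from the start to be a neighborhood of $\delta+A_0$, pick any decomposition $A_0=A_1+A_2$ with $A_i$ ample, and note $\delta+A_1$ need not be in $\mathcal{V}$, so instead apply continuity at the big class $\delta+A_0$ and use that the map $\delta'\mapsto\delta'+A_0$ is a homeomorphism carrying a neighborhood of $\delta$ onto a neighborhood of $\delta+A_0$.

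Let me restate the clean version: by Theorem~\ref{function_big_cone}, $\ord_Z(\parallel-\parallel)$ is continuous on ${\rm Big}(X)_\RR$, so $\{\beta\in{\rm Big}(X)_\RR:\ord_Z(\parallel\beta\parallel)>c\}$ is open; since $\delta+A_0$ lies in it, and $\beta\mapsto\beta-A_0$ is a homeomorphism of $\NS^1(X)_\RR$, the set $\mathcal{N}:=\{\delta':\delta'+A_0\in{\rm Big}(X)_\RR,\ \ord_Z(\parallel\delta'+A_0\parallel)>c\}$ is an open neighborhood of $\delta$. For every $\delta'\in\mathcal{N}$ we have, since $A_0$ is ample, $\sigma_Z(\delta')\geq\ord_Z(\parallel\delta'+A_0\parallel)>c$. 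Intersecting $\mathcal{N}$ with the pseudo-effective cone gives the desired neighborhood, so $\sigma_Z$ is lower semi-continuous. The only point requiring a word of care is the case $\sigma_Z(\delta)=\infty$, but then any finite $c$ is dominated by some $\ord_Z(\parallel\delta+A_0\parallel)$ exactly as above, so the argument is unchanged; no real obstacle arises, as the whole content has been pushed into Theorem~\ref{function_big_cone}.
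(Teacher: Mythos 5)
Your argument is correct and is essentially the paper's: the paper simply notes that each $\phi_{A}(D)=\ord_Z(\parallel D+A\parallel)$ is continuous on the pseudo-effective cone by Theorem~\ref{function_big_cone} and that $\sigma_Z=\sup_A\phi_A$ is therefore lower semi-continuous, which is exactly the fact you unpack by hand in your final ``clean version'' (the detour through $A_0=A_1+A_2$ in your middle paragraph is unnecessary, as you yourself conclude).
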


\begin{proof}
Note that by Theorem~\ref{function_big_cone}, each function $\phi_A$ given by
$\phi_A(D)=\ord_Z(\parallel D+A\parallel)$ is continuous on the 
pseudo-effective cone (here $A$ is an arbitrary ample $\RR$-divisor). Since $\sigma_Z=\sup_A\phi_A$, it follows that
$\sigma_Z$ is lower semi-continuous.
\end{proof}

\begin{theorem}\label{pseudoeff}
Let $Z$ be an irreducible proper closed subset of $X$ and assume that either the ground field
$k$ is uncountable, or $Z$ consists of a point. If $D$ is a pseudo-effective divisor on $X$, then
the following are equivalent:
\begin{enumerate}
\item[i)] $Z$ is not contained in $\BB_-(D)$.
\item[ii)] $\sigma_Z(D)=0$.
\item[iii)] The ideal $\tau_+(\parallel mD\parallel)$ does not vanish along $Z$ for
any $m\geq 1$.
\end{enumerate}
\end{theorem}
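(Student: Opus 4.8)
The plan is to deduce everything from the big-divisor case, Theorem~\ref{char_nonnef}, by splitting the cycle into the two equivalences (i)$\Leftrightarrow$(ii) and (ii)$\Leftrightarrow$(iii). The ingredients are: the description $\BB_-(D)=\bigcup_m\BB_-(D+A_m)$ from Proposition~\ref{properties_nonnef} vi); the identity $\sigma_Z(D)=\lim_m\ord_Z(\parallel D+A_m\parallel)$ for $A_m$ tending to $0$, together with the monotonicity of $A\mapsto\ord_Z(\parallel D+A\parallel)$, both recorded at the beginning of this section; the minimality of $\tau_+$ and its coincidence with $\tau(\lambda\cdot\parallel D+A\parallel)$ for $A$ near the origin, from Proposition~\ref{Hacon_test_ideal}; and the identity $\tau_+(\parallel mD\parallel)=\tau_+(m\cdot\parallel D\parallel)$ from Proposition~\ref{properties_new_definition} iv). Throughout, $D+A$ will be a big $\QQ$-divisor whenever $A$ is a well-chosen ample $\RR$-divisor, so that Theorem~\ref{char_nonnef} applies in the form of Remark~\ref{Q-divisor}.

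For (i)$\Leftrightarrow$(ii), I would fix a sequence of ample $\RR$-divisors $(A_m)_{m\geq1}$ with each $D+A_m$ a $\QQ$-divisor and with the classes of $A_m$ tending to $0$ in $\NS^1(X)_{\RR}$; each $D+A_m$ is then big. Proposition~\ref{properties_nonnef} vi) gives $\BB_-(D)=\bigcup_m\BB_-(D+A_m)$, and since $Z$ is irreducible and either $k$ is uncountable or $Z$ is a point, $Z$ is contained in this countable union of closed subsets if and only if it is contained in one of them. Hence $Z\not\subseteq\BB_-(D)$ if and only if $Z\not\subseteq\BB_-(D+A_m)$ for every $m$. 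By the equivalence i)$\Leftrightarrow$iv) of Theorem~\ref{char_nonnef}, applied to the big $\QQ$-divisors $D+A_m$, this is the same as $\ord_Z(\parallel D+A_m\parallel)=0$ for every $m$; and since $0\leq\ord_Z(\parallel D+A_m\parallel)\leq\sigma_Z(D)$ and $\sigma_Z(D)=\lim_m\ord_Z(\parallel D+A_m\parallel)$, this is in turn equivalent to $\sigma_Z(D)=0$.

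For (ii)$\Rightarrow$(iii), I would fix $m\geq1$ and, using Proposition~\ref{Hacon_test_ideal} with exponent $m$, choose an ample $\RR$-divisor $A$ with $D+A$ a $\QQ$-divisor and class near the origin, so that $\tau_+(\parallel mD\parallel)=\tau_+(m\cdot\parallel D\parallel)=\tau(m\cdot\parallel D+A\parallel)=\tau(\parallel m(D+A)\parallel)$, the first equality by Proposition~\ref{properties_new_definition} iv). Since $\sigma_Z(D)=0$ forces $\ord_Z(\parallel D+A\parallel)=0$, the implication iv)$\Rightarrow$v) of Theorem~\ref{char_nonnef} shows $\tau(\parallel m(D+A)\parallel)$ does not vanish along $Z$, hence neither does $\tau_+(\parallel mD\parallel)$. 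For the converse (iii)$\Rightarrow$(ii), given any ample $\RR$-divisor $A$ with $D+A$ a $\QQ$-divisor, minimality of $\tau_+(m\cdot\parallel D\parallel)$ in Proposition~\ref{Hacon_test_ideal} together with Proposition~\ref{properties_new_definition} iv) gives $\tau_+(\parallel mD\parallel)\subseteq\tau(\parallel m(D+A)\parallel)$ for all $m$, so (iii) implies that $\tau(\parallel m(D+A)\parallel)$ does not vanish along $Z$ for any $m$; the implication v)$\Rightarrow$iv) of Theorem~\ref{char_nonnef}, applied to the big $\QQ$-divisor $D+A$, yields $\ord_Z(\parallel D+A\parallel)=0$, and letting $A\to0$ along a sequence as above gives $\sigma_Z(D)=0$.

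The bulk of the argument is routine bookkeeping with the prior results; the only point demanding a little care is that the neighborhood of the origin furnished by Proposition~\ref{Hacon_test_ideal} depends on the exponent, but in the step (ii)$\Rightarrow$(iii) the integer $m$ is fixed before $A$ is chosen, so this is harmless. The hypothesis that $k$ is uncountable or $Z$ is a point enters exactly once, in passing from $Z\subseteq\bigcup_m\BB_-(D+A_m)$ to $Z\subseteq\BB_-(D+A_m)$ for a single $m$, which is also the only genuinely non-formal ingredient beyond Theorem~\ref{char_nonnef} and the construction of $\tau_+$.
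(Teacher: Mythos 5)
Your proposal is correct and follows essentially the same route as the paper: (i)$\Leftrightarrow$(ii) via $\BB_-(D)=\bigcup_i\BB_-(D+A_i)$, the countability hypothesis, and Theorem~\ref{char_nonnef} (with Remark~\ref{Q-divisor}), and (ii)$\Leftrightarrow$(iii) via the identification $\tau_+(\parallel mD\parallel)=\tau(\parallel m(D+A)\parallel)$ for $A$ small together with the minimality inclusion $\tau_+(\parallel mD\parallel)\subseteq\tau(\parallel m(D+A)\parallel)$. The only cosmetic differences are that you route the identification explicitly through Propositions~\ref{Hacon_test_ideal} and \ref{properties_new_definition}~iv) and phrase (iii)$\Rightarrow$(ii) directly rather than by contradiction.
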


\begin{proof}
Let us fix a sequence of ample $\RR$-divisors $(A_i)_{i\geq 1}$ whose classes
in $\NS^1(X)_{\RR}$ converge to zero, and such that all $D+A_i$ have rational coefficients. By definition, we have
$\sigma_Z(D)=0$ if and only if $\ord_Z(\parallel D+A_i\parallel)=0$ for all $i$.
On the other hand, Proposition~\ref{properties_nonnef} vi) gives 
$\BB_-(D)=\bigcup_i\BB_-(D+A_i)$, hence our hypothesis on $Z$ and $k$ implies
that $Z\not\subseteq\BB_-(D)$ if and only if for every $i$ we have $Z\not\subseteq
\BB_-(D+A_i)$. Therefore the equivalence of i) and ii) follows from the equivalence of  
i) and iv) in Theorem~\ref{char_nonnef} (see Remark~\ref{Q-divisor}). 

Suppose now that ii) holds, hence $\ord(\parallel D+A_i\parallel)=0$ for all $i$.
It follows from Theorem~\ref{char_nonnef} (see also Remark~\ref{Q-divisor}) that for every $m\geq 1$, the ideal $\tau(\parallel 
m(D+A_i)\parallel)$
does not vanish along $Z$. Since $\tau_+(\parallel mD\parallel)=\tau(\parallel m(D+A_i)
\parallel)$ for $i\gg 0$, we get the assertion in iii).

Suppose now that iii) holds. If ii) fails, then there is $i$ such that
$\ord_Z(\parallel D+A_i\parallel)>0$, hence for some $m\geq 1$, the ideal
$\tau(\parallel m(D+A_i)\parallel)$ vanishes along $Z$. Since we have
$\tau_+(\parallel mD\parallel)\subseteq\tau(\parallel m(D+A_i)\parallel)$, we obtain a contradiction with iii). This completes the proof of the theorem. 
\end{proof}

\begin{remark}
It is shown in \cite[Proposition~II.1.10]{Nak} that if $D$ is a pseudo-effective divisor on $X$ and $E_1,\ldots,E_r$ are prime divisors
such that $\sigma_{E_i}(D)>0$ for all $i$, then for every $\alpha_1,\ldots,\alpha_r\in
\RR_{\geq 0}$ and every $i$, one has $\sigma_{E_i}(\alpha_1E_1+\ldots+\alpha_rE_r)=
\alpha_i$ (note that the proof therein is characteristic-free). In particular, this implies that
the classes of $E_1,\ldots,E_r$ in $\NS^1(X)_{\RR}$ are linearly independent, hence
$r$ is bounded above by the Picard number $\rho(X)$ of $X$. If we assume that the ground field is uncountable, we deduce using Theorem~\ref{pseudoeff} that the number of irreducible codimension one subsets of $\BB_-(D)$ is bounded above by $\rho(X)$. 
\end{remark}

\providecommand{\bysame}{\leavevmode \hbox \o3em
{\hrulefill}\thinspace}

\end{document}